\DeclareRobustCommand{\ustirlingfirst}{\genfrac[]{0pt}{}}
\DeclareRobustCommand{\rawfrac}{\genfrac{}{}{0pt}{}}
\DeclarePairedDelimiter{\insideBraces}{\{}{\}}
\DeclarePairedDelimiterX\outsideBraces[1]{\{}{\}}{\mskip-8mu\insideBraces*{#1}\mskip-8mu}
\newcommand{\Astirling}[2]{\outsideBraces*{\rawfrac{#1}{#2}}}
\newcommand{\stirling}[2]{\insideBraces*{\rawfrac{#1}{#2}}}
\definecolor{webgreen}{rgb}{0,.5,0}
\definecolor{webbrown}{rgb}{.6,0,0}
\newcommand{\seqnum}[1]{\href{https://oeis.org/#1}{\rm \underline{#1}}}
\newcommand{\arxiv}[1]{\url{https://arxiv.org/abs/#1}}
\begin{document}
    \theoremstyle{plain}
    \newtheorem{theorem}{Theorem}

    \newtheorem{corollary}[theorem]{Corollary}
    \newtheorem{lemma}[theorem]{Lemma}
    \newtheorem{proposition}[theorem]{Proposition}

    \theoremstyle{definition}
    \newtheorem{definition}[theorem]{Definition}
    \newtheorem{example}[theorem]{Example}
    \newtheorem{conjecture}[theorem]{Conjecture}

    \theoremstyle{remark}
    \newtheorem{remark}[theorem]{Remark}
    \numberwithin{theorem}{section}
    \begin{center}
        \vskip 1cm{\LARGE\bf {Counting Nonattacking Chess Piece Placements: Bishops and Anassas}
        }
        \vskip 1cm
        \large
        Eder G. Santos\\
        Independent Researcher\\
	Brazil \\
        \href{mailto:email}{\tt ederguidsa@gmail.com} \\
    \end{center}

    \vskip .2 in
    \begin{abstract}
        We derive recurrences and closed-form expressions for counting nonattacking placements of two types of chess pieces with unbounded straight-line moves, namely the bishop (two diagonal moves) and the anassa (one horizontal or vertical move and one diagonal move), placed on a standard square chessboard. Additionally, we obtain explicit expressions for the corresponding quasi-polynomial coefficients. The recurrences are derived by analyzing how nonattacking configurations attack a specific subset of board squares, employing a bijective argument to establish the relations. The main results are simplifications of known expressions for the bishop and a general counting formula for the anassa.
    \end{abstract}

\section{Introduction}
In a series of papers, Chaiken, Hanusa, and Zaslavsky \cite{chaiken1, chaiken2, chaiken3, chaiken4, chaiken5, chaiken6, chaiken7} prove that the counting function for the nonattacking placements of $k$ chess pieces with unbounded straight-line moves (also called riders) on a square board of size $m$ is a cyclically repeating sequence of quasi-polynomials $Q(m,k)$ in $m$ of degree $2k$, whose coefficients themselves, after a normalization by $k!$, are polynomials in $k$. The general form for the counting function constituents is, then,
\begin{align*}
    Q(m,k) = \sum_{d=0}^{2k} c_d(k)m^d,
\end{align*}
where the coefficients $c_d(k)$ may vary cyclically, depending on $m$ modulo a number $t$, the period, but not on $m$ itself.

In this paper, we derive explicit formulas for counting the possible nonattacking placements of bishops (two diagonal moves) and anassas (one horizontal or vertical move and one diagonal move) on the square chessboard. The \emph{board} $\mathbf{B}_m$ is defined as the set of coordinate pairs of its squares, and each \emph{piece type} is defined by a set $\mathbf{M}$ of \emph{basic moves}, which are non-zero, non-parallel integral vectors with coprime coordinates. The central ideas of this paper are the following Definition \ref{def:collapsibility} and Theorem \ref{thm:bijection}:
\begin{definition}[Collapsibility, Inductive Subset]\label{def:collapsibility}
  Let $\mathbf{B}_m$ be a board of size $m$, let $\mathbf{I}_m$ be a suitably chosen subset of $\mathbf{B}_m$, and let $\mathbf{M}_P$ be the set of moves of a specific piece type $P$. The board $\mathbf{B}_m$ is said to be \emph{collapsible} under $\{\mathbf{M}_P, \mathbf{I}_m\}$ if there exists a bijection $f:(\mathbf{B}_m-\mathbf{I}_m) \rightarrow \mathbf{B}_{m-1}$ that preserves the neighborhood relations induced by $\mathbf{M}_P$ on the board squares. In such conditions, we also say $\mathbf{I}_m$ is an \emph{inductive subset} of $\mathbf{B}_m$ under $\{f,\mathbf{M}_P\}$.
\end{definition}
\begin{theorem}\label{thm:bijection}
    Let $P_{\mathbf{B}-\mathbf{I}}(m,k)$ and $P_{\mathbf{B}}(m-1,k)$ enumerate the possible nonattacking placements of $k$ pieces $P$, with move set $\mathbf{M}_P$, on the boards $\mathbf{B}_m-\mathbf{I}_m$ and $\mathbf{B}_{m-1}$, respectively, where $\mathbf{I}_m\subset\mathbf{B}_m$ and $\mathbf{B}_{m}$ is a board of size $m$, collapsible under $\{\mathbf{M}_P, \mathbf{I}_m\}$. Then,
    \begin{align*}
        P_{\mathbf{B}-\mathbf{I}}(m,k)=P_{\mathbf{B}}(m-1,k).
    \end{align*}
\end{theorem}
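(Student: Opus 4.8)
The plan is to recast the enumeration of nonattacking placements in purely graph-theoretic terms and then observe that the hypothesis makes $f$ an isomorphism of the relevant graphs, under which the two counts must agree.

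First I would introduce, for any board $\mathbf{B}$ and piece type with move set $\mathbf{M}$, the \emph{attack graph} $G(\mathbf{B},\mathbf{M})$ whose vertex set is $\mathbf{B}$ and in which two distinct squares $a,b$ are joined by an edge precisely when $b-a$ is an integer multiple of some vector in $\mathbf{M}$, i.e., when a piece on $a$ attacks a piece on $b$. A placement of $k$ pieces is nonattacking exactly when the occupied squares form an independent set of size $k$ in this graph; hence $P_{\mathbf{B}}(m-1,k)$ counts the $k$-element independent sets of $G(\mathbf{B}_{m-1},\mathbf{M}_P)$, while $P_{\mathbf{B}-\mathbf{I}}(m,k)$ counts the $k$-element independent sets of the induced subgraph $G(\mathbf{B}_m-\mathbf{I}_m,\mathbf{M}_P)$. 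The point to record here is that for riders the attack relation between two squares depends only on the difference of their coordinates and is symmetric, so deleting the squares of $\mathbf{I}_m$ genuinely produces the induced subgraph on $\mathbf{B}_m-\mathbf{I}_m$ and introduces no new edges.

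Next I would unwind Definition \ref{def:collapsibility}: the statement that $f$ preserves the neighborhood relations induced by $\mathbf{M}_P$ means precisely that for all $a,b\in\mathbf{B}_m-\mathbf{I}_m$, the squares $a$ and $b$ attack each other if and only if $f(a)$ and $f(b)$ attack each other on $\mathbf{B}_{m-1}$. Combined with $f$ being a bijection of vertex sets, this says exactly that $f$ is an isomorphism of graphs $G(\mathbf{B}_m-\mathbf{I}_m,\mathbf{M}_P)\cong G(\mathbf{B}_{m-1},\mathbf{M}_P)$. A graph isomorphism maps independent sets to independent sets (in both directions) and, being a bijection on vertices, restricts to a bijection between the $k$-element independent sets of the two graphs for every $k$. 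Therefore the two families of nonattacking placements are in bijection, and $P_{\mathbf{B}-\mathbf{I}}(m,k)=P_{\mathbf{B}}(m-1,k)$.

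The only genuine obstacle is conceptual rather than computational: one must be certain that ``nonattacking'' is an intrinsic property of the attack graph, i.e., that for unbounded-move riders no intermediate square can block an attack, so that passing from $\mathbf{B}_m$ to $\mathbf{B}_m-\mathbf{I}_m$ neither creates nor destroys attacks among the surviving squares. Once this is made explicit, the theorem follows immediately from the isomorphism, with no dependence on $k$, $m$, or the particular shape of $\mathbf{I}_m$ beyond what Definition \ref{def:collapsibility} guarantees.
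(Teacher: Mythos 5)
Your proposal is correct and follows essentially the same route as the paper: the paper's proof simply asserts that the bijection $f$ guaranteed by Definition \ref{def:collapsibility} induces a one-to-one correspondence between nonattacking configurations on the two boards, which is exactly your graph-isomorphism argument spelled out in more detail. Your explicit remarks on independent sets and on attacks not being blocked by intermediate pieces are useful clarifications of the convention the paper leaves implicit, but they do not change the underlying argument.
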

\begin{proof}
    If $\mathbf{B}_{m}$ is collapsible under $\{\mathbf{M}_P, \mathbf{I}_m\}$, then, by Definition \ref{def:collapsibility}, there exists a one to one correspondence between the nonattacking configurations of the $k$ pieces $P$ on boards $\mathbf{B}_m-\mathbf{I}_m$ and $\mathbf{B}_{m-1}$. Hence, the respective enumerating functions must be equal.
\end{proof}
The general strategy used in this paper is to find a specific inductive subset $\mathbf{I}_m$ of $\mathbf{B}_m$ and then analyze how the nonattacking configurations placed on $\mathbf{B}_m-\mathbf{I}_m$ attack $\mathbf{I}_m$. Then, we apply Theorem \ref{thm:bijection} in order to derive a recurrence for $P_{\mathbf{B}}(m,k)$.

In the following sections, we apply these concepts to nonattacking placements of bishops and anassas on the square chessboard. Throughout this paper, unless otherwise specified, we make use of extended definitions for the binomial coefficients \cite{kronenburg} and for the Stirling numbers of both kinds \cite[p.\ 267]{knuth,knuth2}, allowing their arguments to assume any integer value. We also provide links to relevant entries in the On-Line Encyclopedia of Integer Sequences (OEIS) \cite{OEIS} and use the following notation:
\begin{center}
\begin{tabular}{cl}
    $\displaystyle\stirling{m}{k}$ & Stirling number of the second kind (\seqnum{A008277}). \\[4mm]
    $\displaystyle\ustirlingfirst{m}{k}$ & Unsigned Stirling number of the first kind (\seqnum{A132393}).\\[4mm]
    $\displaystyle\Astirling{m}{k}$ & Associated Stirling number of the second kind (\seqnum{A008299}).\\[4mm]
    $\displaystyle\delta_{ij}$ & Kronecker Delta function, 1 if $i=j$ and 0 otherwise. \\[2mm]
    $\displaystyle\pi(m)$ & Parity function, 1 if $m$ odd and 0 otherwise. \\[2mm]
    $\displaystyle[m^d]Q(m,k)$ & Coefficient of $m^d$ in the quasi-polynomial $Q(m,k)$. \\[2mm]
    $\displaystyle\lceil x\rceil$ & Ceiling function, $\min\{n\ |\ n\geq x$, $n$ integer\}. \\[2mm]
    $\displaystyle\lfloor x\rfloor$ & Floor function, $\max\{n\ |\ n\leq x$, $n$ integer\}. \\[2mm]
    $\displaystyle(m)_k$ & Falling factorial, $m(m-1)\cdots(m-k+1)$, $(m)_0=1$.
\end{tabular}
\end{center}

\section{The bishop}
\subsection{Recurrences and closed-form solutions}
We provide an alternative proof for the following result, first obtained by Arshon \cite{arshon} and then refined by Kot\v{e}\v{s}ovec \cite[p.\ 248]{kotesovec}.
\begin{theorem}[Arshon, Kot\v{e}\v{s}ovec]\label{Thm:Arshon, Kotesotev}
  Let $B_{\mathbf{S}}(m,k)$ be the number of possible nonattacking configurations of $k$ bishops placed on the square board $\mathbf{S}_m$. Then,
  \begin{align*}
     B_{\mathbf{S}}(m,k) =\sum_{j=0}^{m}\sum_{i=0}^{\lfloor\frac{m+1}{2}\rfloor}\binom{\left\lfloor \frac{m+1}{2} \right\rfloor}{i}\stirling{i+\left\lfloor\frac{m}{2}\right\rfloor}{m-j}\sum_{l=0}^{\lfloor\frac{m}{2}\rfloor}\binom{\lfloor\frac{m}{2}\rfloor}{l}\stirling{l+\left\lfloor\frac{m+1}{2}\right\rfloor}{m-k+j}.
  \end{align*}
\end{theorem}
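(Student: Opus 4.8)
The plan is to split the count according to the color of the squares, reduce each color class to a rook problem, and then derive and solve a short pair of coupled recurrences via Theorem~\ref{thm:bijection}. Passing to the coordinates $(u,v)=(x+y,x-y)$ turns each bishop move into a rook move, and splits the transformed board of $\mathbf{S}_m$ into two connected pieces according to the parity of $u$: the even‑sum color class $\mathcal{W}_m$ and the odd‑sum class $\mathcal{B}_m$. Because two bishops on squares of opposite color never attack, writing $\rho_{\mathcal{W}}(m,j)$ and $\rho_{\mathcal{B}}(m,j)$ for the numbers of nonattacking placements of $j$ bishops confined to $\mathcal{W}_m$ and to $\mathcal{B}_m$ gives at once
\begin{align*}
B_{\mathbf{S}}(m,k)=\sum_{j}\rho_{\mathcal{W}}(m,j)\,\rho_{\mathcal{B}}(m,k-j),
\end{align*}
so the whole problem reduces to evaluating the two factors; the stated double sum is exactly this convolution once one matches the argument $m-j$ with the first factor and $m-k+j=m-(k-j)$ with the second, the extended Stirling numbers rendering the summation ranges harmless.

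I would then feed each color class into the collapsibility machinery. Taking $\mathbf{I}_m$ to be the main diagonal $\{(i,i):1\le i\le m\}$ regarded inside $\mathcal{W}_m$ (a single ``column'' in the tilted picture), $\mathcal{W}_m-\mathbf{I}_m$ is collapsible, via the coordinate identification, onto the opposite‑color board one size down, $\mathcal{B}_{m-1}$; symmetrically, the diagonal $\{x-y=1\}$ is an inductive subset of $\mathcal{B}_m$ (with $m-1$ cells) under which $\mathcal{B}_m$ collapses onto $\mathcal{W}_{m-1}$. The attack analysis is clean in this setup: a bishop lying off the main diagonal attacks \emph{exactly one} cell of $\mathbf{I}_m$, namely the one sharing its anti‑diagonal, and in a nonattacking configuration distinct bishops lie on distinct anti‑diagonals and hence block distinct cells of $\mathbf{I}_m$, while at most one bishop can be placed on $\mathbf{I}_m$ itself. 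Counting the ways to extend a placement on $\mathcal{W}_m-\mathbf{I}_m$ by an optional bishop on a still‑free cell of $\mathbf{I}_m$, and invoking Theorem~\ref{thm:bijection}, yields
\begin{align*}
\rho_{\mathcal{W}}(m,k)&=\rho_{\mathcal{B}}(m-1,k)+(m-k+1)\,\rho_{\mathcal{B}}(m-1,k-1),\\
\rho_{\mathcal{B}}(m,k)&=\rho_{\mathcal{W}}(m-1,k)+(m-k)\,\rho_{\mathcal{W}}(m-1,k-1),
\end{align*}
together with $\rho_{\mathcal{W}}(1,k)=\delta_{k0}+\delta_{k1}$ and $\rho_{\mathcal{B}}(1,k)=\delta_{k0}$.

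To finish I would propose the closed forms
\begin{align*}
\rho_{\mathcal{W}}(m,p)=\sum_{i}\binom{\lceil m/2\rceil}{i}\stirling{i+\lfloor m/2\rfloor}{m-p},\qquad
\rho_{\mathcal{B}}(m,p)=\sum_{l}\binom{\lfloor m/2\rfloor}{l}\stirling{l+\lceil m/2\rceil}{m-p},
\end{align*}
and verify them by induction on $m$: substituted into the two recurrences, the required identities collapse to the Stirling recurrence $\stirling{n}{j}=j\,\stirling{n-1}{j}+\stirling{n-1}{j-1}$, together with Pascal's rule for the binomial coefficients in the $\mathcal{W}$‑step; the even‑$m$ and odd‑$m$ cases proceed in parallel, but the two floors trade places, so the bookkeeping must be kept straight. (Alternatively, one recognizes $\mathcal{W}_m$ and $\mathcal{B}_m$ as Ferrers boards with column heights $m,m-2,m-2,m-4,m-4,\dots$ and $m-1,m-1,m-3,m-3,\dots$ respectively, reads their rook polynomials off the classical Goldman--Joichi--White factorization, and uses $x^{b}(x+1)^{a}=\sum_{q}\bigl(\sum_i\binom{a}{i}\stirling{i+b}{q}\bigr)(x)_{q}$ to extract the rook numbers.) Substituting the two closed forms into the convolution and relabeling then produces precisely the displayed formula. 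The main obstacle is the middle step: establishing the collapsibility isomorphisms $\mathcal{W}_m-\mathbf{I}_m\cong\mathcal{B}_{m-1}$ and $\mathcal{B}_m-\mathbf{I}_m\cong\mathcal{W}_{m-1}$ rigorously — exhibiting a neighborhood‑preserving bijection through the tilted coordinates while tracking the parity‑dependent shapes — and, relatedly, making the ``exactly one attacked cell'' count airtight; once the recurrences are secured, the concluding induction is routine.
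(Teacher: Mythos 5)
Your proposal is correct, and it rests on the same pillars as the paper's treatment: split by color, view each color class as a rook board in tilted coordinates, derive a recurrence from Theorem~\ref{thm:bijection} via a suitably chosen inductive subset, verify a binomial--Stirling closed form by induction, and convolve the two factors. The one genuine structural difference is in the recurrence step. You keep the coloring of $\mathbf{S}_m$ fixed and collapse each class onto the \emph{opposite} class one size down (removing the main diagonal from the even-sum class and the subdiagonal $x-y=1$ from the odd-sum class), which yields coupled cross-color recurrences with the parity-free coefficients $m-k+1$ and $m-k$. The paper instead lets the label $\mathbf{W}_m$ float (it always denotes the class of size $\lceil m^2/2\rceil$), takes as inductive subset the union of the main diagonal and the superdiagonal, and obtains the two uncoupled recurrences \eqref{eq:recurrenceWhitebishops}--\eqref{eq:recurrenceBlackbishops} at the price of the parity-dependent coefficients $m-k+\pi(m)$ and $m-k+1-\pi(m)$. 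These are two bookkeeping conventions for the same underlying bijection: your cross-color maps are precisely the restriction of the paper's $f_B$ (which shifts $i+j$ by one and hence swaps the geometric colors) to a single color class, and I checked that your coefficients, initial conditions, and closed forms agree with \eqref{eq:closedWhitebishop}--\eqref{eq:closedBlackbishops} after the index substitution $j\mapsto\lceil m/2\rceil-j$ carried out in the remark following Theorem~\ref{thm:whitebishops}, as well as with small cases such as $B_{\mathbf{S}}(4,2)=92$. A further difference of emphasis: you prove the theorem directly in the Arshon--Kot\v{e}\v{s}ovec form stated above, whereas the paper proves its own simplified form (Theorem~\ref{thm:closedbishops}) and only identifies it with the stated formula afterwards. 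The points you flag as needing care do go through cleanly: a bishop off the chosen diagonal never meets it along its own (parallel) diagonal and meets it in exactly one square along its anti-diagonal, nonattacking bishops occupy distinct anti-diagonals and hence block distinct cells, and the coupled induction closes using Pascal's rule together with $\stirling{n}{j}=j\stirling{n-1}{j}+\stirling{n-1}{j-1}$, exactly as in the paper's verification of its uncoupled version.
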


Although this formula is correct, Chaiken, Hanusa, and Zaslavsky \cite{chaiken5} state that it is not entirely satisfactory due to the dependency of the summation indices on $m$, which makes its quasi-polynomial nature unclear. We provide a further refinement to this expression that makes the quasi-polynomial form more evident.

First, we divide the square board into white and black squares, noting that a bishop, once placed on one of the colors, never leaves it. Then, the problem reduces to placing bishops on two independent boards. Table \ref{tab:bishops} illustrates these boards and their associated square board. In order to simplify the visualization, the diagonals of the square board $\mathbf{S}_m$ were translated into rows and columns, so that placing rooks on such translated boards corresponds to placing bishops on the respective colors of the square board.
\begin{table}
\begin{center}
\begin{tabular}{c|c|c|c}
      $m$ & Square Board ($\mathbf{S}_m$) & White Board 
      ($\mathbf{W}_m$) & Black Board ($\mathbf{K}_m$)\\
      \hline & & & \\
      0 & $\emptyset$ & $\emptyset$ & $\emptyset$ \\
       & & & \\
      \raisebox{.5\height}{1} & \ytableaushort {\none} *{1} & \ytableaushort {\none} *{1} & \raisebox{.5\height}{$\emptyset$} \\ & & & \\
      \raisebox{-0.5\height}{2} & \ytableaushort {\none} *{2,2} *[*(black)]{1+1,1} &
     \ytableaushort {\none} *{2} &  \ytableaushort {\none} *{1,1}\\ & & & \\
     \raisebox{-2\height}{3} & \ytableaushort {\none} *{3,3,3} *[*(black)]{1+1,1,1+1} *[*(black)]{0,2+1} &
     \ydiagram {1+1, 3, 1+1} & \ytableaushort {\none} *{2,2}\\
     & & & \\
     \raisebox{-3\height}{4} & \ytableaushort {\none} *{4,4,4,4} *[*(black)]{1+1,1,1+1,1} *[*(black)]{3+1,2+1,3+1,2+1} &
     \ydiagram {1+2,4,1+2} & \ydiagram {1+1,3,3,1+1}\\
     & & & \\
     \raisebox{-4\height}{5} & \ytableaushort {\none} *{5,5,5,5,5} *[*(black)]{1+1,1,1+1,1,1+1} *[*(black)]{3+1,2+1,3+1,2+1,3+1} *[*(black)]{0,4+1,0,4+1,0}&
     \ydiagram {2+1,1+3,5,1+3,2+1} & \ydiagram {1+2,4,4,1+2}\\
     & & & \\
     \raisebox{-5\height}{6} & \ytableaushort {\none} *{6,6,6,6,6,6} *[*(black)]{1+1,1,1+1,1,1+1,1} *[*(black)]{3+1,2+1,3+1,2+1,3+1,2+1} *[*(black)]{5+1,4+1,5+1,4+1,5+1,4+1}&
     \ydiagram {2+2,1+4,6,1+4,2+2} & \ydiagram {2+1,1+3,5,5,1+3,2+1} 
  \end{tabular} 
  \end{center}
  \caption{\label{tab:bishops} The sequences of boards $(\mathbf{S}_m)_{m \geq 0}$, $(\mathbf{W}_m)_{m \geq 0}$, and $(\mathbf{K}_m)_{m \geq 0}$ up to $m=6$, corresponding to the square board and the associated white and black boards. The diagonals of $\mathbf{S}_m$ are translated into rows and columns, so that placing bishops on $\mathbf{S}_m$ is equivalent to placing rooks on $\mathbf{W}_m$ and on $\mathbf{K}_m$.}
\end{table}

  From the point of view of Definition \ref{def:collapsibility}, note that the bishop has move set $\mathbf{M}_B=\{(1,1),(-1,1)\}$. We choose an inductive subset candidate $\mathbf{I}_m$ as the shaded squares of Figure \ref{fig:im}, which translate to the boards $\mathbf{W}_m$ and $\mathbf{K}_m$ as the leftmost longest columns $\mathbf{I}_m^\mathbf{W}$ and $\mathbf{I}_m^\mathbf{K}$, respectively (shaded squares of Figures \ref{fig:iwm} and \ref{fig:ikm}). We define a bijection $f_B:(i,j) \in (\mathbf{S}_m-\mathbf{I}_m) \rightarrow (k,l) \in \mathbf{S}_{m-1}$ as follows:
\begin{align*}
    (k,l) = 
    \begin{cases}
      (i-1,j), & \text{if $2 \leq i \leq m$ and $i > j$;}\\
      (i,j-1), & \text{if $3 \leq j \leq m$ and $j > i + 1$.}
    \end{cases}
\end{align*}

Bijection $f_B$ can be interpreted as a one-unit downward translation of the upper part of $\mathbf{S}_m-\mathbf{I}_m$ followed by a one-unit leftward translation of the lower part of $\mathbf{S}_m-\mathbf{I}_m$, in order to build $\mathbf{S}_{m-1}$. Then, it is straightforward to verify that $\mathbf{S}_m$ is collapsible under $\{\mathbf{M}_B, \mathbf{I}_m\}$, because all diagonal neighborhood relations induced by $\mathbf{M}_B$ are preserved by $f_B$. Bijection $f_B$ is equivalent to a horizontal joining of both parts of $\mathbf{W}_m-\mathbf{I}_m^\mathbf{W}$, in order to build $\mathbf{W}_{m-1}$, and similarly for $\mathbf{K}_m-\mathbf{I}_m^\mathbf{K}$, in order to build $\mathbf{K}_{m-1}$.
\begin{figure}[ht]
\centering 
\subcaptionbox{\label{fig:im}}{
  \begin{ytableau}
          \none[6] & & & & & *(gray!40) &  *(gray!40) \\
          \none[5] & & & & *(gray!40)&*(gray!40) & \\
          \none[4] & & & *(gray!40) &  *(gray!40)& &  \\
          \none[3] & &  *(gray!40)&  *(gray!40)& & &  \\
          \none[2] & *(gray!40)& *(gray!40)& & & & \\
          \none[1] & *(gray!40)& & & & &  \\
          \none & \none[1] & \none[2] & \none[3] & \none[4] & \none[5] & \none[6] 
        \end{ytableau}}
        \hspace{2em}
       \subcaptionbox{\label{fig:iwm}}{
  \ydiagram {2+2,1+4,6,1+4,2+2} *[*(gray!40)]{2+1,2+1,2+1,2+1,2+1}
  \vspace{2em}}
  \hspace{2em}
  \subcaptionbox{\label{fig:ikm}}{
  \ydiagram {2+1,1+3,5,5,1+3,2+1} *[*(gray!40)]{2+1,2+1,2+1,2+1,2+1,2+1}
  \vspace{1em}}
  \captionsetup{subrefformat=parens}
\caption{\subref*{fig:im}~Board $\mathbf{S}_6$ with $\mathbf{I}_6$ shaded. \subref*{fig:iwm}~White board $\mathbf{W}_6$ with $\mathbf{I}_6^\mathbf{W}$ shaded. \subref*{fig:ikm}~Black board $\mathbf{K}_6$ with $\mathbf{I}_6^\mathbf{K}$ shaded.
\label{fig:bishopInductiveCol2}}
\end{figure}
\begin{theorem}
\label{thm:whitebishops}
  Let $R_{\mathbf{W}}(m,k)$ be the number of possible nonattacking configurations of $k$ rooks placed on $\mathbf{W}_m$. Similarly, let $R_{\mathbf{K}}(m,k)$ be the number of possible nonattacking configurations of $k$ rooks placed on $\mathbf{K}_m$. Then,
  \begin{align}
     R_{\mathbf{W}}(m,k) &= R_{\mathbf{W}}(m-1,k) + (m-k+\pi(m))R_{\mathbf{W}}(m-1,k-1),
     \label{eq:recurrenceWhitebishops}\\
      R_{\mathbf{K}}(m,k) &= R_{\mathbf{K}}(m-1,k) + (m-k+1-\pi(m))R_{\mathbf{K}}(m-1,k-1)\label{eq:recurrenceBlackbishops},
  \end{align}
  with initial conditions $R_{\mathbf{W}}(m,0)=R_{\mathbf{K}}(m,0)=1$ and $R_{\mathbf{W}}(0,k)=R_{\mathbf{K}}(0,k)=\delta_{k0}$. These recurrences have the following closed-form solutions:
  \begin{align}
     R_{\mathbf{W}}(m,k) &= \sum_{j=0}^{k}\binom{\left\lceil\frac{m}{2}\right\rceil}{j}\stirling{m-j}{m-k},
     \label{eq:closedWhitebishop}\\
          R_{\mathbf{K}}(m,k) &= \sum_{j=0}^{k}\binom{\lfloor\frac{m}{2}\rfloor}{j}\stirling{m-j}{m-k},
     \label{eq:closedBlackbishops}
  \end{align}
  valid for all integers $m,k \geq 0$.
\end{theorem}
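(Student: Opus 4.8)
The plan is to prove the two recurrences first, using the collapsing of $\mathbf{S}_m$ already set up, and then to derive the closed forms by induction on $m$ using these recurrences.

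For \eqref{eq:recurrenceWhitebishops} I partition the nonattacking placements of $k$ rooks on $\mathbf{W}_m$ according to whether the inductive column $\mathbf{I}_m^{\mathbf{W}}$ is empty or occupied. If it is empty, all $k$ rooks lie on $\mathbf{W}_m-\mathbf{I}_m^{\mathbf{W}}$; since $f_B$ restricts to the horizontal joining of the two parts of $\mathbf{W}_m-\mathbf{I}_m^{\mathbf{W}}$ into $\mathbf{W}_{m-1}$ described before the theorem (and this joining is an isomorphism of rook boards, as no two distinct columns get merged), Theorem \ref{thm:bijection} gives $R_{\mathbf{W}}(m-1,k)$ such placements. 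If it is occupied, exactly one rook sits in $\mathbf{I}_m^{\mathbf{W}}$: the remaining $k-1$ rooks form a nonattacking placement on $\mathbf{W}_m-\mathbf{I}_m^{\mathbf{W}}$ ($R_{\mathbf{W}}(m-1,k-1)$ choices), and then one must count the cells of $\mathbf{I}_m^{\mathbf{W}}$ in which the last rook may legally be placed. Being a single column, $\mathbf{I}_m^{\mathbf{W}}$ creates no column-conflict with the other rooks, so the only constraint is to avoid the rows they occupy.

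The key geometric point is that $\mathbf{I}_m^{\mathbf{W}}$ is a full-height column of $\mathbf{W}_m$ of length $\ell_{\mathbf{W}}(m)=m-1+\pi(m)$: this one checks directly from the shape of $\mathbf{W}_m$, equivalently by counting the diagonals $i-j=\mathrm{const}$ of $\mathbf{S}_m$ that contain a white square, which number $m-1$ for $m$ even and $m$ for $m$ odd. (The analogous count for black squares gives $\ell_{\mathbf{K}}(m)=m-\pi(m)$, and $\mathbf{I}_m^{\mathbf{K}}$ is full-height in $\mathbf{K}_m$.) Because the column spans every row of $\mathbf{W}_m$, each of the $k-1$ already-placed rooks lies in a distinct row meeting $\mathbf{I}_m^{\mathbf{W}}$, so exactly $\ell_{\mathbf{W}}(m)-(k-1)=m-k+\pi(m)$ cells of $\mathbf{I}_m^{\mathbf{W}}$ remain available; when this quantity is negative, $\mathbf{W}_{m-1}$ has too few rows to carry $k-1$ nonattacking rooks and $R_{\mathbf{W}}(m-1,k-1)=0$, so the product is still correct. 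Summing the two cases gives \eqref{eq:recurrenceWhitebishops}; the identical argument on $\mathbf{K}_m$ with $\ell_{\mathbf{K}}(m)$ in place of $\ell_{\mathbf{W}}(m)$ gives \eqref{eq:recurrenceBlackbishops}. The initial conditions are immediate from $\mathbf{W}_0=\mathbf{K}_0=\emptyset$.

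For the closed forms I induct on $m$ (base case $m=0$, where both sides reduce to $\delta_{k0}$ via $\stirling{0}{-k}=\delta_{k0}$), treating the parities of $m$ separately since $\lceil m/2\rceil$ either equals $\lceil (m-1)/2\rceil$ (when $m$ is even) or exceeds it by $1$ (when $m$ is odd). When $m$ is even, $\pi(m)=0$ and $\lceil m/2\rceil=\lceil(m-1)/2\rceil$, so feeding the inductive hypotheses into \eqref{eq:recurrenceWhitebishops} and applying the Stirling recurrence $\stirling{m-j}{m-k}=(m-k)\stirling{m-1-j}{m-k}+\stirling{m-1-j}{m-1-k}$ term by term reproduces $\sum_{j}\binom{m/2}{j}\stirling{m-j}{m-k}$ exactly. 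When $m$ is odd, $\pi(m)=1$; here I also use Pascal's rule $\binom{(m+1)/2}{j}=\binom{(m-1)/2}{j}+\binom{(m-1)/2}{j-1}$ and an index shift $j\mapsto j+1$, after which the same Stirling recurrence closes the induction — the extra summand $\stirling{m-1-j}{m-k}$ produced by the shifted binomial is precisely what turns the coefficient $m-k$ into $m-k+1$. For $R_{\mathbf{K}}(m,k)$ the computation is word-for-word the same with the two parities interchanged, matching the $1-\pi(m)$ in \eqref{eq:recurrenceBlackbishops} and the floor $\lfloor m/2\rfloor$ in \eqref{eq:closedBlackbishops}.

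The main obstacle is the geometric step of the recurrence: pinning down that $\mathbf{I}_m^{\mathbf{W}}$ and $\mathbf{I}_m^{\mathbf{K}}$ are full-height columns of the stated lengths, so that the number of legal cells for the last rook depends only on $m$ and $k$ and not on the configuration of the other $k-1$ rooks. Once the recurrences are established, the partition argument and the parity-split induction are routine.
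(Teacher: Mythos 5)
Your proposal is correct and follows essentially the same route as the paper: the same two-case decomposition (inductive column empty versus occupied by one rook), the same use of Theorem \ref{thm:bijection} via the collapsing bijection, the same counts $m-1+\pi(m)$ and $m-\pi(m)$ for the full-height columns $\mathbf{I}_m^{\mathbf{W}}$ and $\mathbf{I}_m^{\mathbf{K}}$, and the same verification of the closed forms by substitution into the recurrence using the Stirling recurrence together with Pascal's rule. The only cosmetic difference is that you split the induction by parity where the paper packages both parities into a single identity involving $\pi(m)$.
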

\begin{proof}
First, note that $\mathbf{I}_m^\mathbf{W}$ has $m-1+\pi(m)$ squares and each nonattacking rook placed on $\mathbf{W}_m - \mathbf{I}_m^\mathbf{W}$ attacks $\mathbf{I}_m^\mathbf{W}$ on exactly one distinct square. Thus, we have two disjoint possibilities for placing $k$ nonattacking rooks on $\mathbf{W}_m$:
\begin{enumerate}[wide=0.005mm,label=Case \arabic*.]
\item Place all of them on $\mathbf{W}_m - \mathbf{I}_m^\mathbf{W}$. This can be done in $R_{\mathbf{W}-\mathbf{I}^\mathbf{W}}(m,k)=R_{\mathbf{W}}(m-1,k)$ ways, using Theorem \ref{thm:bijection}.
\item Place $k-1$ of them on $\mathbf{W}_m - \mathbf{I}_m^\mathbf{W}$ and then place the remaining rook on one of the $m-1+\pi(m)-(k-1)=m-k+\pi(m)$ unattacked squares of $\mathbf{I}_m^\mathbf{W}$. This can be done in $(m-k+\pi(m))R_{\mathbf{W}-\mathbf{I}^\mathbf{W}}(m,k-1) = (m-k+\pi(m))R_{\mathbf{W}}(m-1,k-1)$ ways, using Theorem \ref{thm:bijection}.
\end{enumerate}

These cases establish Equation \eqref{eq:recurrenceWhitebishops}. The initial conditions are consequences of an empty board, due either to the absence of squares ($m=0$) or to the absence of pieces ($k=0$). In order to prove the validity of the closed-form solution, first check the initial conditions, using the identity $\stirling{-j}{-k}=\ustirlingfirst{k}{j}$, 
\begin{align*}
    R_{\mathbf{W}}(m,0) &= \binom{\left\lceil\frac{m}{2}\right\rceil}{0}\stirling{m}{m} = 1, \\
    R_{\mathbf{W}}(0,k) &= \sum_{j=0}^k\binom{0}{j}\stirling{-j}{-k}=\ustirlingfirst{k}{0}=\delta_{k0}.
\end{align*}
Then, replace \eqref{eq:closedWhitebishop} in \eqref{eq:recurrenceWhitebishops} and use the identities $m-1+\pi(m-1) = m-\pi(m)$, $\stirling{m-j}{m-k}=\stirling{m-1-j}{m-1-k}+(m-k)\stirling{m-1-j}{m-k}$, and $\binom{\frac{1}{2}(m-\pi(m))}{j}+\pi(m)\binom{\frac{1}{2}(m-\pi(m))}{j-1} = \binom{\frac{1}{2}(m+\pi(m))}{j}$ in order to prove that the recurrence holds:
\begin{align*}
    &R_{\mathbf{W}}(m-1,k)+(m-k+\pi(m))R_{\mathbf{W}}(m-1,k-1) =\\ &=\sum_{j=0}^{k}\binom{\frac{1}{2}(m-1+\pi(m-1))}{j}\stirling{m-1-j}{m-1-k} +\\ &+ (m-k+\pi(m))\sum_{j=0}^{k-1}\binom{\frac{1}{2}(m-1+\pi(m-1))}{j}\stirling{m-1-j}{m-k} \\
    &=\binom{\frac{1}{2}(m-\pi(m))}{k}+\sum_{j=0}^{k-1}\binom{\frac{1}{2}(m-\pi(m))}{j}\left[\stirling{m-j}{m-k} + \pi(m)\stirling{m-1-j}{m-k}\right] \\ &=\sum_{j=0}^{k}\binom{\frac{1}{2}(m-\pi(m))}{j}\stirling{m-j}{m-k} + \pi(m)\sum_{j=1}^{k}\binom{\frac{1}{2}(m-\pi(m))}{j-1}\stirling{m-j}{m-k} \\ &= \stirling{m}{m-k}+\sum_{j=1}^{k}\left[\binom{\frac{1}{2}(m-\pi(m))}{j}+\pi(m)\binom{\frac{1}{2}(m-\pi(m))}{j-1}\right]\stirling{m-j}{m-k} \\  &=\sum_{j=0}^{k}\binom{\frac{1}{2}(m+\pi(m))}{j}\stirling{m-j}{m-k} = R_{\mathbf{W}}(m,k).
\end{align*}

For the black board, note that $\mathbf{I}_m^\mathbf{K}$ has $m-\pi(m)$ squares and follow similar steps to obtain \eqref{eq:recurrenceBlackbishops}. Then, use a similar reasoning to check the validity of the closed-form solution \eqref{eq:closedBlackbishops}.
\end{proof}
\begin{remark}
In fact, the approach used to solve recurrence \eqref{eq:recurrenceWhitebishops}, for instance, is more natural if we consider $R_{\mathbf{W}}(m,m-k)$, using the boundary condition $R_{\mathbf{W}}(m,m)=\delta_{m1}+\delta_{m0}$. By solving for some small $k$, the reader may correctly guess and check the following closed-form solution:
\begin{align*}
R_{\mathbf{W}}(m,m-k) = \sum_{j=0}^{k}\binom{k}{j}\frac{(-1)^{k-j}(j+1)^{\frac{1}{2}(m+\pi(m))}j^{\frac{1}{2}(m-\pi(m))}}{k!}.
\end{align*}
This was essentially the expression found by Arshon \cite{arshon}, as stated by Kot\v{e}\v{s}ovec \cite[pp.\ 242--248]{kotesovec}. By expanding $(j+1)^{\frac{1}{2}(m+\pi(m))}$ using the binomial theorem, rearranging summation indices and using the identity $\stirling{m}{k}=\sum_{j=0}^{k}\binom{k}{j}\frac{(-1)^{k-j}j^m}{k!}$, Kot\v{e}\v{s}ovec was able to refine Arshon's result, obtaining
\begin{align*}
R_{\mathbf{W}}(m,k) = \sum_{j=0}^{\frac{1}{2}(m+\pi(m))}\binom{\frac{1}{2}(m+\pi(m))}{j}\stirling{j+\frac{1}{2}(m-\pi(m))}{m-k}.
\end{align*}
However, this can be further simplified by noting that
\begin{align*}
R_{\mathbf{W}}(2m,k) &= \sum_{j=m-k}^{m}\binom{m}{j}\stirling{j+m}{2m-k}=\sum_{j=0}^{k}\binom{m}{j}\stirling{2m-j}{2m-k}, \\
R_{\mathbf{W}}(2m-1,k) &= \sum_{j=m-k}^{m}\binom{m}{j}\stirling{j+m-1}{2m-1-k}= \sum_{j=0}^{k}\binom{m}{j}\stirling{2m-1-j}{2m-1-k}.
\end{align*}
Combining both equations results in \eqref{eq:closedWhitebishop}. A similar approach yields \eqref{eq:closedBlackbishops}.
\end{remark}
\begin{theorem}\label{thm:closedbishops}
  Let $B_{\mathbf{S}}(m,k)$ be the number of possible nonattacking configurations of $k$ bishops placed on the square board $\mathbf{S}_m$. Then,
  \begin{align}
     B_{\mathbf{S}}(m,k)
     &=\sum_{j=0}^{k}\sum_{i=0}^{j}\binom{\lfloor\frac{m}{2}\rfloor}{i}\stirling{m-i}{m-j}\sum_{l=0}^{k-j}\binom{\lceil\frac{m}{2}\rceil}{l}\stirling{m-l}{m-k+j},
     \label{eq:closedbishops}
  \end{align}
  for all integers $m,k \geq 0$.
\end{theorem}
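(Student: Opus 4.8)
The plan is to exploit the color decomposition of the square board already displayed in Table~\ref{tab:bishops}. A bishop placed on a white square can only ever reach white squares, and likewise a bishop on a black square stays on black squares; hence the white squares and the black squares of $\mathbf{S}_m$ form two mutually non-interacting sub-boards, and any bishop on a white square is automatically nonattacking with respect to any bishop on a black square. Consequently, a nonattacking placement of $k$ bishops on $\mathbf{S}_m$ is the same datum as a choice of an integer $j$ with $0\le j\le k$, a nonattacking placement of $j$ bishops on the black squares, and a nonattacking placement of $k-j$ bishops on the white squares.

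The first step is to record the resulting product formula
\begin{align*}
B_{\mathbf{S}}(m,k)=\sum_{j=0}^{k}R_{\mathbf{K}}(m,j)\,R_{\mathbf{W}}(m,k-j).
\end{align*}
Here I invoke the equivalence, stated in the discussion preceding Theorem~\ref{thm:whitebishops}, between placing bishops on the white (resp.\ black) squares of $\mathbf{S}_m$ and placing rooks on the translated board $\mathbf{W}_m$ (resp.\ $\mathbf{K}_m$): the translation sending diagonals of $\mathbf{S}_m$ to rows and columns is a bijection of squares that carries the diagonal attack relation to the rook attack relation, so it sends nonattacking bishop placements to nonattacking rook placements and back. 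One should check the degenerate cases $m=0,1$ directly against the conventions $R_{\mathbf{W}}(0,k)=R_{\mathbf{K}}(0,k)=\delta_{k0}$, but these cause no trouble.

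The second step is pure substitution. Insert the closed forms \eqref{eq:closedWhitebishop} and \eqref{eq:closedBlackbishops} from Theorem~\ref{thm:whitebishops}:
\begin{align*}
R_{\mathbf{K}}(m,j)=\sum_{i=0}^{j}\binom{\lfloor\frac{m}{2}\rfloor}{i}\stirling{m-i}{m-j},
\qquad
R_{\mathbf{W}}(m,k-j)=\sum_{l=0}^{k-j}\binom{\lceil\frac{m}{2}\rceil}{l}\stirling{m-l}{m-k+j}.
\end{align*}
Substituting these into the product formula yields exactly the triple sum \eqref{eq:closedbishops}, the inner $i$-sum producing the black-board factor and the inner $l$-sum the white-board factor. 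Since Theorem~\ref{thm:whitebishops} guarantees these closed forms for all integers $m,k\ge 0$ under the extended binomial and Stirling conventions, identity \eqref{eq:closedbishops} holds for all $m,k\ge0$.

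As for the main obstacle: there is no real difficulty here, since the substantive work was already done in Theorem~\ref{thm:whitebishops}. The only points needing care are (i) confirming that $\mathbf{W}_m$ carries $\lceil m/2\rceil$ long rows/columns while $\mathbf{K}_m$ carries $\lfloor m/2\rfloor$ of them, so that the half-integer arguments land on the correct factor of the convolution, and (ii) checking that the summation ranges in \eqref{eq:closedbishops} ($j$ from $0$ to $k$, $i$ from $0$ to $j$, $l$ from $0$ to $k-j$) coincide with those produced by the convolution together with the two closed forms — which they do.
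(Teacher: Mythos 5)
Your proposal is correct and follows essentially the same route as the paper: the paper's proof is precisely the convolution $B_{\mathbf{S}}(m,k)=\sum_{j=0}^{k}R_{\mathbf{K}}(m,j)R_{\mathbf{W}}(m,k-j)$ over the color decomposition, followed by substitution of the closed forms \eqref{eq:closedWhitebishop} and \eqref{eq:closedBlackbishops}. You simply spell out the justification of the convolution (color invariance of bishop moves and the diagonal-to-rook translation) more explicitly than the paper does.
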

\begin{proof}
Replace Equations \eqref{eq:closedBlackbishops} and \eqref{eq:closedWhitebishop} in the following:
\begin{align}
B_{\mathbf{S}}(m,k) = \sum_{j=0}^{k}R_{\mathbf{K}}(m,j)R_{\mathbf{W}}(m,k-j). \label{eq:bishopsNotDeveloped}
\end{align}
\end{proof}
\par Equation \eqref{eq:closedbishops} makes evident the quasi-polynomial nature of $B_{\mathbf{S}}(m,k)$ and gives an upper bound for the period $t=2$. Regarding the exact period, a deeper analysis is needed because there may be cancellations in the sums. Take, for example, the case $k=1$, $B_{\mathbf{S}}(m,1)=\frac{m^2+\pi(m)}{2}+\frac{m^2-\pi(m)}{2}=m^2$, or the case $k=2$, $B_{\mathbf{S}}(m,2)=12\binom{m}{4}+14\binom{m}{3}+4\binom{m}{2}$, both having period $t=1$. The exact period $t=2$ for all $k \geq 3$ was rigorously proven by Chaiken, Hanusa, and Zaslavsky \cite[Thm.\ 1.1]{chaiken6}.

Theorems \ref{thm:whitebishops} and \ref{thm:closedbishops} led to the submission of a new sequence, \seqnum{A378590}, to the OEIS database, along with updates to the sequences \seqnum{A274105} and \seqnum{A274106}.
\begin{remark}\label{rem:mminusone}
Another aspect mentioned by Chaiken, Hanusa, and Zaslavsky \cite{chaiken5} is the possibility of retrieving the number of combinatorial types of the nonattacking configurations for the piece by making $m=-1$ in the counting formulas. They showed \cite[Thm.\ 5.8]{chaiken1} that this number is $k!$ for all pieces with exactly two unbounded straight-line moves. Indeed, making $m=-1$ in Equation \eqref{eq:closedbishops} yields
\begin{align*}
B_{\mathbf{S}}(-1,k)&=\sum_{j=0}^{k}\sum_{i=0}^{j}\binom{\lfloor-\frac{1}{2}\rfloor}{i}\stirling{-1-i}{-1-j}\sum_{l=0}^{k-j}\binom{\lceil-\frac{1}{2}\rceil}{l}\stirling{-1-l}{-1-k+j}\\
&=\sum_{j=0}^{k}(k-j)!\sum_{i=0}^{j}(-1)^i\ustirlingfirst{j+1}{i+1}=k!,
\end{align*}
where the following identities were used: 
\begin{enumerate*}[label=\alph*)]
\item $\binom{\lfloor-\frac{1}{2}\rfloor}{i}=(-1)^i$; 
\item $\binom{\lceil-\frac{1}{2}\rceil}{l}=\delta_{l0}$; 
\item $\stirling{-m}{-k}=\ustirlingfirst{k}{m}$; 
\item $\ustirlingfirst{k+1}{1}=k!$; and 
\item $\sum_{i=0}^{j}(-1)^i\ustirlingfirst{j+1}{i+1}=\delta_{j0}$.
\end{enumerate*} 
\end{remark}

\subsection{Quasi-polynomial coefficients for the bishop}
In order to proceed to explicit calculation of the quasi-polynomial coefficients for the bishop using Equation \eqref{eq:closedbishops}, two results are needed. 
\begin{lemma} \label{thm:ward}
Let $m$ and $k$ be nonnegative integers. Then,
\begin{align}
\stirling{m}{m-k}=\sum_{p=0}^{k}\Astirling{k+p}{p}\binom{m}{k+p}.
\end{align}
\end{lemma}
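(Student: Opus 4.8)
The plan is to prove the identity $\stirling{m}{m-k}=\sum_{p=0}^{k}\Astirling{k+p}{p}\binom{m}{k+p}$ by a combinatorial double-counting argument on set partitions. The left-hand side counts the partitions of an $m$-element set into exactly $m-k$ blocks; equivalently, since we remove $k$ "degrees of freedom" from the trivial partition into singletons, these are exactly the partitions in which the multiset of block sizes, after discarding all singleton blocks, is a partition of some integer into $p$ parts each of size $\geq 2$, for some $0 \leq p \leq k$, with the total excess $\sum(\text{size}-1)$ over the non-singleton blocks equal to $k$. First I would make this bookkeeping precise: if a partition of $[m]$ into $m-k$ blocks has $p$ non-singleton blocks covering a total of $k+p$ elements (so the remaining $m-k-p$ elements are singletons, and indeed $(m-k-p)+p = m-k$ blocks, consistent), then the non-singleton part is a partition of a $(k+p)$-element set into $p$ blocks each of size at least $2$.

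The key steps, in order, would be: (1) Choose which $k+p$ of the $m$ elements lie in non-singleton blocks: $\binom{m}{k+p}$ ways. (2) Partition those $k+p$ chosen elements into exactly $p$ blocks, each of size $\geq 2$: by definition this is the associated Stirling number $\Astirling{k+p}{p}$. (3) The remaining $m-k-p$ elements are forced to be singletons, contributing a unique completion. (4) Sum over $p$ from $0$ to $k$ (the range being correct because $p\geq 0$ trivially, and $p\leq k$ since each of the $p$ non-singleton blocks has excess at least $1$ and the total excess is exactly $k$; when $p=0$ we get $\Astirling{k}{0}$, which is nonzero only when $k=0$, matching $\stirling{m}{m}=1$). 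Collecting the counts over all $p$ gives precisely the claimed sum, since every partition of $[m]$ into $m-k$ blocks is counted exactly once, under the unique value of $p$ equal to its number of non-singleton blocks.

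Alternatively, if a purely algebraic proof is preferred, I would instead start from the known generating-function or defining recurrence for the associated Stirling numbers, $\Astirling{n}{p}=p\,\Astirling{n-1}{p}+(n-1)\Astirling{n-2}{p-1}$, together with $\stirling{m}{m-k}=\stirling{m-1}{m-1-k}+(m-k)\stirling{m-1}{m-k}$ (used already in the excerpt), and verify that the right-hand side $\sum_{p}\Astirling{k+p}{p}\binom{m}{k+p}$ satisfies the same recurrence in $m$ and $k$ with the same initial values $\stirling{0}{-k}=\delta_{k0}$ and $\stirling{m}{m}=1$; this reduces to the Vandermonde/Pascal relation $\binom{m}{k+p}=\binom{m-1}{k+p}+\binom{m-1}{k+p-1}$ after matching terms.

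The main obstacle I expect is purely notational rather than conceptual: getting the index shifts exactly right so that "excess $k$, with $p$ non-singleton blocks on $k+p$ elements" lines up with the summation bounds and with the extended-integer conventions for $\Astirling{k+p}{p}$ and $\binom{m}{k+p}$ at the boundary cases (small $m$, $k=0$, or $m-k$ negative). I would handle this by checking $k=0$ and $k=1$ explicitly against known values before presenting the general bijection, and by appealing to the convention (cited in the excerpt) that makes $\binom{m}{k+p}=0$ when $0\leq m<k+p$, so that the stated range $0\leq p\leq k$ is automatically the effective one.
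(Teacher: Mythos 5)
Your combinatorial argument is correct and complete. The paper itself does not prove Lemma \ref{thm:ward}: it only remarks that the identity ``is a well-known result and may be proven by induction,'' so your write-up actually supplies more than the source does. The double count is sound: in a partition of $[m]$ into $m-k$ blocks with $p$ blocks of size at least $2$, the total excess $\sum(\text{size}-1)$ over those blocks equals $k$, so they cover exactly $k+p$ elements, forcing the remaining $m-k-p$ elements to be singletons; choosing the covered set in $\binom{m}{k+p}$ ways and partitioning it into $p$ blocks of size at least $2$ in $\Astirling{k+p}{p}$ ways, then summing over $p$ (bounded by $k$ since each non-singleton block contributes excess at least $1$), gives the identity, with the degenerate cases $k>m$ and $p=0$ absorbed by the conventions $\binom{m}{k+p}=0$ for $k+p>m\geq 0$ and $\Astirling{k}{0}=\delta_{k0}$, consistent with the paper's extended definitions. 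Compared with the induction the paper gestures at (which would use the recurrences $\stirling{m}{m-k}=\stirling{m-1}{m-1-k}+(m-k)\stirling{m-1}{m-k}$ and $\Astirling{n}{p}=p\Astirling{n-1}{p}+(n-1)\Astirling{n-2}{p-1}$ and is somewhat fiddly to align), your bijective classification by the number of non-singleton blocks is more transparent and explains \emph{why} the Ward numbers appear; your fallback algebraic sketch is essentially the route the paper had in mind.
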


An interesting property \cite{fekete} of the numbers $\Astirling{m}{k}$ is that they follow the recurrence
\begin{align*}
\Astirling{m}{k}=k\Astirling{m-1}{k}+(m-1)\Astirling{m-2}{k-1},
\end{align*}
with initial conditions $\Astirling{0}{k}=\delta_{k0}$,  $\Astirling{1}{k}=0$ and $\Astirling{m}{0}=\delta_{m0}$. Written in the form $\Astirling{m+k}{k}$, these numbers are also referenced as the Ward numbers (\seqnum{A134991}). The related OEIS sequences assume $k > 0$ and $m \geq 2$, but here we have trivially extended the definition of $\Astirling{m}{k}$ in order to account for the cases $k=0$ and $0 \leq m<2$ as well. Lemma \ref{thm:ward} is a well-known result and may be proven by induction.
\begin{lemma} \label{thm:binomial}
Let $p$ and $q$ be nonnegative integers, and let $x$ and $z$ be arbitrary complex numbers. Then,
\begin{align}
\binom{2x+z-q}{p}\binom{x}{q}=\sum_{i=0}^{p+q}\beta_i(p,q,z)\binom{2x+z}{i}, \label{eq:binomial}
\end{align}
where
\begin{align}
\beta_i(p,q,z) = \frac{1}{2^{2q}}\sum_{b=\max(0,i-p)}^{q}2^b\binom{2q-b}{q}\sum_{a=0}^{b}\binom{p+b-q-z}{a}\binom{q+z}{b-a}\binom{a-q}{p+b-i}.\label{eq:binomialAlpha}
\end{align}
\end{lemma}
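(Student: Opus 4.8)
\emph{Reduction to a Newton coefficient.} The plan is to regard both sides as polynomials in the single quantity $M:=2x+z$ and to match the coefficients of the Newton basis $\binom{M}{i}$. As polynomials in $M$, the factor $\binom{2x+z-q}{p}=\binom{M-q}{p}$ has degree $p$ and $\binom{x}{q}=\binom{(M-z)/2}{q}$ has degree $q$, so the left-hand side has degree $p+q$; since $\binom{M}{0},\dots,\binom{M}{p+q}$ form a basis of that space, expansion \eqref{eq:binomial} holds with unique coefficients and $\beta_i(p,q,z)$ is precisely the coefficient of $\binom{M}{i}$ on the left. Thus everything reduces to computing this coefficient.

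\emph{Killing the half-integer argument.} The obstacle is the argument $(M-z)/2$ appearing in $\binom{(M-z)/2}{q}$. I would remove it with the half-argument (duplication) identity
\[
\binom{w}{q}=\frac{1}{2^{2q}}\sum_{b=0}^{q}2^{b}\binom{2q-b}{q}\binom{2w-2q+b}{b},
\]
valid for all $w$ — this is exactly where the factor $2^{b}\binom{2q-b}{q}/2^{2q}$ in \eqref{eq:binomialAlpha} originates. It is a standard fact, provable by induction on $q$ or by a generating-function computation (sum against $t^{q}$, interchange the sums, and use $\sum_{q\ge b}\binom{2q-b}{q}\rho^{q}=(1-4\rho)^{-1/2}\bigl((1-\sqrt{1-4\rho})/2\bigr)^{b}$ together with $\sum_{b}\binom{N}{b}y^{b}=(1+y)^{N}$, finishing with a residue evaluation). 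Applying it with $w=(M-z)/2$ turns the left-hand side of \eqref{eq:binomial} into
\[
\frac{1}{2^{2q}}\sum_{b=0}^{q}2^{b}\binom{2q-b}{q}\binom{M-q}{p}\binom{M+b-2q-z}{b}.
\]

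\emph{Vandermonde assembly.} It now suffices to show that, for each $b$, the product $\binom{M-q}{p}\binom{M+b-2q-z}{b}$ has Newton expansion $\sum_{i}\bigl(\sum_{a}\binom{p+b-q-z}{a}\binom{q+z}{b-a}\binom{a-q}{p+b-i}\bigr)\binom{M}{i}$; summing over $b$ then yields exactly $\sum_{i}\beta_i(p,q,z)\binom{M}{i}$ with $\beta_i$ as in \eqref{eq:binomialAlpha}, and since that product has degree $p+b$ in $M$ its Newton coefficients vanish for $i>p+b$, which produces the lower limit $\max(0,i-p)$. I would verify this Newton expansion by running it backwards: in $\sum_{i}(\cdots)\binom{M}{i}$ expand $\binom{a-q}{p+b-i}=\sum_{c}\binom{a}{c}\binom{-q}{p+b-i-c}$ by Vandermonde, absorb $\binom{p+b-q-z}{a}\binom{a}{c}=\binom{p+b-q-z}{c}\binom{p+b-q-z-c}{a-c}$, carry out the $a$-sum and then the $i$-sum by Vandermonde (obtaining $\binom{p+b-c}{p}$ and $\binom{M-q}{p+b-c}$ respectively), peel off $\binom{M-q}{p+b-c}\binom{p+b-c}{p}=\binom{M-q}{p}\binom{M-q-p}{b-c}$, and finish with one last Vandermonde on the $c$-sum; everything collapses to $\binom{M-q}{p}\binom{M+b-2q-z}{b}$, as required.

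\emph{Where the work lies.} The conceptual steps — the degree/uniqueness reduction, the passage to Newton coefficients, and the bookkeeping of the summation ranges — are routine. The main difficulty I expect is twofold: (i) establishing the half-argument duplication identity in precisely the normalized shape above, and (ii) carrying out the chain of Chu--Vandermonde and absorption steps of the previous paragraph in the one order that reproduces the asymmetric triple sum \eqref{eq:binomialAlpha} verbatim — in particular the specific split into $\binom{p+b-q-z}{a}$ and $\binom{q+z}{b-a}$ and the placement of $\binom{a-q}{p+b-i}$ — since many Vandermonde-equivalent rewritings of the same Newton coefficient exist and only this one matches the stated formula.
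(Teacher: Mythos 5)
Your proposal is correct and structurally coincides with the paper's proof. Your half-argument duplication identity is precisely what the paper extracts from Gould--Quaintance identities (1.40)/(1.42) via upper negation, namely $\binom{x}{q}=2^{-2q}\sum_{b=0}^{q}2^{b}\binom{2q-b}{q}\binom{2x-2q+b}{b}$, and after applying it both arguments face the same task of expanding the product $\binom{M-q}{p}\binom{M+b-2q-z}{b}$ (with $M=2x+z$) in the basis $\binom{M}{i}$. The one genuine difference is how that expansion is obtained: the paper cites the Gould--Quaintance product formula (6.81), $\binom{w}{m}\binom{y}{n}=\sum_{k=0}^{n}\binom{m-w+y}{k}\binom{n+w-y}{n-k}\binom{w+k}{m+n}$, applied with $w=M-q$, $m=p$, $y=M+b-2q-z$, $n=b$, followed by one Vandermonde convolution on $\binom{M-q+a}{p+b}$, whereas you verify the same expansion by running it backwards through a chain of Vandermonde and absorption steps. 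I checked that chain and it closes: the $a$-sum yields $\binom{p+b-c}{p}$, the $i$-sum yields $\binom{M-q}{p+b-c}$, the absorption gives $\binom{M-q}{p}\binom{M-q-p}{b-c}$, and the final $c$-sum gives $\binom{M+b-2q-z}{b}$. So your argument is a self-contained, more elementary substitute for the citation of (6.81); what the paper's route buys is brevity, while yours buys independence from the tabulated identity, plus the explicit uniqueness-of-Newton-coefficients framing and the degree explanation of the lower limit $\max(0,i-p)$, both of which the paper leaves implicit in its ``rearrange the summation indices'' step.
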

\begin{proof}
We begin with two combinatorial identities, listed by Gould and Quaintance \cite[identities (1.40) and (1.42)]{gould}. Let $x$ be an arbitrary complex number and let $q$ be a nonnegative integer. Then,
\begin{align}
(-1)^{q}2^{2q}\binom{x}{q}=\sum_{b=0}^{q}\binom{-q-1}{q-b}\binom{2q-2x-1}{b}2^{b}.\label{eq:idt}
\end{align}
Now, let $p$ be a nonnegative integer and let $z$ be an arbitrary complex number. Use the identity $\binom{-x}{q}=(-1)^q\binom{x+q-1}{q}$ and multiply $\binom{2x+z-q}{p}$ on both sides of \eqref{eq:idt} to obtain
\begin{align}\label{eq:inter1}
\binom{2x+z-q}{p}\binom{x}{q}=\frac{1}{2^{2q}}\sum_{b=0}^{q}2^{b}\binom{2q-b}{q}\binom{2x+z-q}{p}\binom{2x-2q+b}{b}.
\end{align}
We also have the following result \cite[identity (6.81)]{gould} for arbitrary complex numbers $w$ and $y$ and nonnegative integers $m$ and $n$: 
\begin{align}\label{eq:nanj}
\binom{w}{m}\binom{y}{n}=\sum_{k=0}^{n}\binom{m-w+y}{k}\binom{n+w-y}{n-k}\binom{w+k}{m+n}.
\end{align}
Making $w=2x+z-q$, $m=p$, $y=2x-2q+b$, $n=b$, and expanding the binomial product in \eqref{eq:inter1} yields
\begin{align*}
\binom{2x+z-q}{p}\binom{x}{q}=\frac{1}{2^{2q}}\sum_{b=0}^{q}2^{b}\binom{2q-b}{q}\sum_{a=0}^b\binom{p+b-q-z}{a}\binom{q+z}{b-a}\binom{2x+z-q+a}{p+b}.
\end{align*}
Then, use the generalized version of the Vandermonde Convolution \cite[identity (6.9)]{gould} to write
\begin{align*}
\binom{2x+z-q+a}{p+b}=\sum_{i=0}^{p+b}\binom{2x+z}{i}\binom{a-q}{p+b-i}.
\end{align*}
The final step is to rearrange the summation indices in order to obtain \eqref{eq:binomial} and \eqref{eq:binomialAlpha}.
\end{proof}
\begin{theorem}\label{thm:coeffbishops}
    The quasi-polynomial coefficients of $R_{\mathbf{W}}(m,k)$, $R_{\mathbf{K}}(m,k)$ and $B_{\mathbf{S}}(m,k)$ are, respectively,
    \begin{align}
        [m^d]R_{\mathbf{W}}(m,k)&=\sum_{i=d}^{2k}\frac{(-1)^{i-d}}{i!}\ustirlingfirst{i}{d}\sum_{p=\max(0,i-k)}^{k}\sum_{j=p}^{k}\Astirling{p+j}{p}\beta_i(p+j,k-j,-\pi(m)), \label{eq:coeffWhitebishops} \\
        [m^d]R_{\mathbf{K}}(m,k)&=\sum_{i=d}^{2k}\frac{(-1)^{i-d}}{i!}\ustirlingfirst{i}{d}\sum_{p=\max(0,i-k)}^{k}\sum_{j=p}^{k}\Astirling{p+j}{p}\beta_i(p+j,k-j,\pi(m)), \label{eq:coeffBlackbishops}\\
         [m^d]B_{\mathbf{S}}(m,k) &= \sum_{j=0}^{k}\sum_{i=0}^{d}\left([m^i]R_{\mathbf{W}}(m,j)\right)\left([m^{d-i}]R_{\mathbf{K}}(m,k-j)\right),\label{eq:coeffbishops}
    \end{align}
    for $0 \leq d \leq 2k$.
\end{theorem}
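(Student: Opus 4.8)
The plan is to reduce all three identities to the closed forms \eqref{eq:closedWhitebishop} and \eqref{eq:closedBlackbishops} already in hand, by turning the Stirling numbers of the second kind and the half‑integer binomials that appear there into ordinary binomials $\binom{m}{i}$, and then expanding each $\binom{m}{i}$ into powers of $m$ via the falling‑factorial identity $\binom{m}{i}=\tfrac{1}{i!}\sum_{d=0}^{i}(-1)^{i-d}\ustirlingfirst{i}{d}m^{d}$. Since the quasi‑polynomial coefficients of $R_{\mathbf{W}}$ and $R_{\mathbf{K}}$ within a fixed residue class of $m$ modulo $2$ do not depend on $m$, I would assume throughout that $m$ is large enough that every Stirling‑number argument below is nonnegative, so that Lemma \ref{thm:ward} applies literally.

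For \eqref{eq:coeffWhitebishops} I would start from $R_{\mathbf{W}}(m,k)=\sum_{j=0}^{k}\binom{\lceil m/2\rceil}{j}\stirling{m-j}{m-k}$, write $\lceil m/2\rceil=\tfrac{1}{2}(m+\pi(m))$, and first apply Lemma \ref{thm:ward} with parameters $m-j$ and $k-j$, obtaining $\stirling{m-j}{m-k}=\sum_{p=0}^{k-j}\Astirling{k-j+p}{p}\binom{m-j}{k-j+p}$. Next I would apply Lemma \ref{thm:binomial} to each product $\binom{m-j}{k-j+p}\binom{\lceil m/2\rceil}{j}$ with the substitutions $x=\lceil m/2\rceil$, $z=-\pi(m)$ (so that $2x+z=m$ exactly), $q=j$, and the lemma's $p$ replaced by $k-j+p$; this gives $\binom{m-j}{k-j+p}\binom{\lceil m/2\rceil}{j}=\sum_{i=0}^{k+p}\beta_i(k-j+p,\,j,\,-\pi(m))\binom{m}{i}$. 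After the reindexing $j\mapsto k-j$, which converts $\Astirling{k-j+p}{p}$ and $\beta_i(k-j+p,j,\cdot)$ into $\Astirling{p+j}{p}$ and $\beta_i(p+j,k-j,\cdot)$, I would substitute the falling‑factorial expansion of $\binom{m}{i}$ and read off the coefficient of $m^{d}$. Finally I would extend the $i$‑summation up to $2k$ (harmless since $\beta_i(p+j,k-j,z)=0$ whenever $i>(p+j)+(k-j)=k+p$, because the left side of \eqref{eq:binomial} has degree $p+q$ in $x$) and reverse the order of summation to $\sum_{i=d}^{2k}\sum_{p=\max(0,i-k)}^{k}\sum_{j=p}^{k}$, which yields exactly \eqref{eq:coeffWhitebishops}. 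The derivation of \eqref{eq:coeffBlackbishops} is word‑for‑word the same, using $\lfloor m/2\rfloor=\tfrac{1}{2}(m-\pi(m))$ and hence the substitution $x=\lfloor m/2\rfloor$, $z=+\pi(m)$. For \eqref{eq:coeffbishops} I would simply plug the polynomial expansions $R_{\mathbf{W}}(m,k-j)=\sum_{i}\left([m^{i}]R_{\mathbf{W}}(m,k-j)\right)m^{i}$ and $R_{\mathbf{K}}(m,j)=\sum_{i}\left([m^{i}]R_{\mathbf{K}}(m,j)\right)m^{i}$ into \eqref{eq:bishopsNotDeveloped}, multiply out, and collect the coefficient of $m^{d}$, obtaining the stated Cauchy product.

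The conceptual content is entirely carried by Lemmas \ref{thm:ward} and \ref{thm:binomial}; the rest is bookkeeping. The step I expect to be the main obstacle is getting the summation ranges in \eqref{eq:coeffWhitebishops} exactly right after the two interchanges of summation — in particular, justifying that the upper limit for $i$ may be taken uniformly to be $2k$ and that the correct lower limit for $p$ is $\max(0,i-k)$ — together with tracking the sign of $z=\mp\pi(m)$ that distinguishes the white board from the black board.
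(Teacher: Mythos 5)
Your proposal is correct and follows essentially the same route as the paper: apply Lemma \ref{thm:ward} to turn the Stirling numbers of the second kind into binomials, apply Lemma \ref{thm:binomial} with $2x+z=m$ to merge the resulting binomial products into $\binom{m}{i}$, then expand $\binom{m}{i}$ via the unsigned Stirling numbers of the first kind, with the same reindexing and the same summation-range bookkeeping. The only (cosmetic) difference is that you treat both parities simultaneously through $z=\mp\pi(m)$, whereas the paper splits into the even and odd cases $2m$ and $2m\pm 1$ and recombines at the end.
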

\begin{proof}
     From Equation \eqref{eq:closedWhitebishop} and Lemma \ref{thm:ward}
    \begin{align*}
        R_{\mathbf{W}}(2m,k) &= \sum_{j=0}^{k}\binom{m}{j}\stirling{2m-j}{2m-k} = \sum_{j=0}^{k}\sum_{p=0}^{k-j}\Astirling{p+k-j}{p}\binom{2m-j}{p+k-j}\binom{m}{j}, \\
        R_{\mathbf{W}}(2m-1,k) &= \sum_{j=0}^{k}\binom{m}{j}\stirling{2m-1-j}{2m-1-k}=\sum_{j=0}^{k}\sum_{p=0}^{k-j}\Astirling{p+k-j}{p}\binom{2m-1-j}{p+k-j}\binom{m}{j}.
    \end{align*}
    Now relabel $j$ to $k-j$, use Lemma \ref{thm:binomial}, and rearrange the summation indices to obtain
    \begin{align*}
        R_{\mathbf{W}}(2m,k) &= \sum_{i=0}^{2k}\sum_{p=\max(0,i-k)}^{k}\sum_{j=p}^{k}\Astirling{p+j}{p}\beta_i(p+j,k-j,0)\binom{2m}{i}, \\
        R_{\mathbf{W}}(2m-1,k) &= \sum_{i=0}^{2k}\sum_{p=\max(0,i-k)}^{k}\sum_{j=p}^{k}\Astirling{p+j}{p}\beta_i(p+j,k-j,-1)\binom{2m-1}{i}.
    \end{align*}
    The final step is to use the identity $\binom{x}{i}=\sum_{d=0}^{i}\frac{(-1)^{i-d}}{i!}\ustirlingfirst{i}{d}x^d$ and rearrange the summation indices again. Here, $L(i,k)=\max(0,i-k)$:
    \begin{align*}
        R_{\mathbf{W}}(2m,k) &= \sum_{d=0}^{2k}\sum_{i=d}^{2k}\frac{(-1)^{i-d}}{i!}\ustirlingfirst{i}{d}\sum_{p=L(i,k)}^{k}\sum_{j=p}^{k}\Astirling{p+j}{p}\beta_i(p+j,k-j,0)(2m)^d, \\
        R_{\mathbf{W}}(2m-1,k) &= \sum_{d=0}^{2k}\sum_{i=d}^{2k}\frac{(-1)^{i-d}}{i!}\ustirlingfirst{i}{d}\sum_{p=L(i,k)}^{k}\sum_{j=p}^{k}\Astirling{p+j}{p}\beta_i(p+j,k-j,-1)(2m-1)^d.
    \end{align*}
    Combining both results yields \eqref{eq:coeffWhitebishops}. A similar approach, using $R_{\mathbf{K}}(2m,k)$ and $R_{\mathbf{K}}(2m+1,k)$, results in \eqref{eq:coeffBlackbishops}. A trivial manipulation in \eqref{eq:bishopsNotDeveloped} yields \eqref{eq:coeffbishops}.
\end{proof}

\section{The anassa}
\subsection{Recurrence and closed-form solution}
In this paper, without any loss of generality, anassas are considered to be pieces with move set $\mathbf{M}_A=\{(0,1),(1,1)\}$, as shown in Figure \ref{fig:anassa}. We choose $\mathbf{\Lambda}_m=\mathbf{V}_m\cup\mathbf{D}_m$ (Figure \ref{fig:lambda}) as the inductive subset and define a bijection $f_A:(i,j) \in (\mathbf{S}_m-\mathbf{\Lambda}_m) \rightarrow (k,l) \in \mathbf{S}_{m-1}$ as follows:
\begin{align*}
    (k,l) = 
    \begin{cases}
      (i,j-1), & \text{if $i < j < m$;}\\
      (i,j), & \text{if $j < i < m$.}\\
    \end{cases}
\end{align*}

Bijection $f_A$ can be interpreted as a one-unit downward translation of the upper part of $\mathbf{S}_m-\mathbf{\Lambda}_m$, in order to build $\mathbf{S}_{m-1}$. Then, it is straightforward to verify that $\mathbf{S}_m$ is collapsible under $\{\mathbf{M}_A,\mathbf{\Lambda}_m\}$ because all right diagonal and vertical neighborhood relations induced by $\mathbf{M}_A$ are preserved by $f_A$.
\begin{figure}[ht]
  \centering 
    \subcaptionbox{\label{fig:anassa}}{
    \begin{ytableau}
          \none[6] & & & *(gray!40) & & *(gray!40) &  \\
          \none[5] & & & *(gray!40) & *(gray!40)& & \\
          \none[4] & & & *(gray!40)\bullet& & & \\
          \none[3] & & *(gray!40) &  *(gray!40) & & &  \\
          \none[2] & *(gray!40) & & *(gray!40) & & & \\
          \none[1] & & & *(gray!40) & & & \\
          \none & \none[1] & \none[2] & \none[3] & \none[4] & \none[5] & \none[6] 
    \end{ytableau}}
    \hspace{2em}
    \subcaptionbox{\label{fig:lambda}}{
        \begin{ytableau}
          \none[6] & & & & & & *(gray!40) \\
          \none[5] & & & & & *(gray!40) &*(gray) \\
          \none[4] & & & & *(gray!40)& & *(gray) \\
          \none[3] & & &  *(gray!40) & & & *(gray)  \\
          \none[2] & & *(gray!40)& & & &*(gray) \\
          \none[1] & *(gray!40)& & & & & *(gray) \\
          \none & \none[1] & \none[2] & \none[3] & \none[4] & \none[5] & \none[6] 
        \end{ytableau}}
        \captionsetup{subrefformat=parens}
  \caption{\subref*{fig:anassa}~Illustration of the attacks of an anassa placed on square $(3,4)$. \subref*{fig:lambda}~Sets $\mathbf{\Lambda}_6$ (light and dark shaded squares), $\mathbf{V}_6$ (only dark shaded squares) and  $\mathbf{D}_6$ (only light shaded squares).
  \label{fig:anassasInSquareBoard}}
\end{figure}

Here, the number of attacked squares in $\mathbf{\Lambda}_m$ depends essentially on how many pieces are placed below and above $\mathbf{\Lambda}_m$, since every nonattacking configuration placed on $\mathbf{S}_m-\mathbf{\Lambda}_m$, with $p$ pieces below $\mathbf{\Lambda}_m$ and $k-p$ pieces above $\mathbf{\Lambda}_m$, attacks $\mathbf{\Lambda}_m$ on exactly $2p+(k-p)=k+p$ distinct squares, with $k$ attacked squares in $\mathbf{D}_m$ and $p$ attacked squares in $\mathbf{V}_m$. Thus, a natural choice is to count the nonattacking configurations with exactly $p$ pieces placed strictly below the main diagonal of $\mathbf{S}_m$.
\begin{theorem}
  Let $A_{\mathbf{S}}(m,k,p)$ be the number of possible nonattacking configurations of $k$ anassas placed on the square board $\mathbf{S}_m$ such that exactly $p$ of them are placed strictly below the board's main diagonal. Then,
  \begin{gather}
     A_{\mathbf{S}}(m,k,p) = A_{\mathbf{S}}(m-1,k,p) + (m-k+1)A_{\mathbf{S}}(m-1,k-1,p)+ \nonumber\\+ (m-p)A_{\mathbf{S}}(m-1,k-1,p-1)+(m-p)(m-k+1)A_{\mathbf{S}}(m-1,k-2,p-1),\label{eq:recurrenceanassas}
  \end{gather}
  with initial conditions $A_\mathbf{S}(0,k,p)=\delta_{k0}\delta_{p0}$, $A_\mathbf{S}(m,0,p)=\delta_{p0}$, $A_\mathbf{S}(m,1,p)=\stirling{m+1}{m}\delta_{p0}+\stirling{m}{m-1}\delta_{p1}$ and $A_\mathbf{S}(m,k,0)=\stirling{m+1}{m-k+1}$. This recurrence has the following closed-form solution:
  \begin{align}
     A_{\mathbf{S}}(m,k,p)=\sum_{j=0}^{p}(m-k+j)_j\binom{k-p-1}{j}\binom{k-j}{k-p}\stirling{m+1}{m-k+j+1},
     \label{eq:closedanassas}
  \end{align}
  valid for all integers $m,k,p \geq 0$.
\end{theorem}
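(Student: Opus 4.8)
The plan is to obtain the recurrence \eqref{eq:recurrenceanassas} from Theorem~\ref{thm:bijection} via the case split foreshadowed in the paragraph before the theorem, then to confirm the four families of initial conditions, and finally to verify \eqref{eq:closedanassas} by induction on $m$. For the recurrence I would fix $m\ge 1$ and a nonattacking placement of $k$ anassas on $\mathbf{S}_m$ with exactly $p$ pieces strictly below the main diagonal, and classify it by how its pieces meet $\mathbf{\Lambda}_m=\mathbf{D}_m\cup\mathbf{V}_m$. Two anassas attack each other precisely when they share a column or a NE--SW diagonal, a symmetric relation; since $\mathbf{D}_m$ lies on a single such diagonal and $\mathbf{V}_m$ together with the corner $(m,m)\in\mathbf{D}_m$ forms a single column, a nonattacking placement has at most one piece on $\mathbf{D}_m$, at most one on $\mathbf{V}_m$, and cannot have both when the $\mathbf{D}_m$-piece is the corner. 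These are exactly the four disjoint cases matching the four terms of \eqref{eq:recurrenceanassas}: no piece on $\mathbf{\Lambda}_m$; one on $\mathbf{D}_m$ only; one on $\mathbf{V}_m$ only; an off-corner piece on $\mathbf{D}_m$ together with one on $\mathbf{V}_m$.

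In each case the pieces off $\mathbf{\Lambda}_m$ form a nonattacking placement on $\mathbf{S}_m-\mathbf{\Lambda}_m$ which, by the count recalled just before the theorem, attacks exactly as many (distinct) squares of $\mathbf{D}_m$ as it has pieces and exactly as many (distinct) squares of $\mathbf{V}_m$ as it has pieces below the diagonal. By symmetry of the attack relation, dropping an extra piece on an unattacked square of $\mathbf{D}_m$ or $\mathbf{V}_m$ keeps the placement nonattacking, and an off-corner $\mathbf{D}_m$-piece attacks no square of $\mathbf{V}_m$; counting unattacked squares then gives $m-k+1$ admissible positions on $\mathbf{D}_m$ in both the one- and two-extra-piece cases (in the latter the corner, though unattacked by the rest, is excluded because it shares a column with the $\mathbf{V}_m$-piece), and $m-p$ on $\mathbf{V}_m$. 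A $\mathbf{V}_m$-piece is strictly below the diagonal while a $\mathbf{D}_m$-piece is not, and the collapsing bijection $f_A$ carries strictly-below-diagonal squares of $\mathbf{S}_m-\mathbf{\Lambda}_m$ to strictly-below-diagonal squares of $\mathbf{S}_{m-1}$ (and on-or-above to on-or-above), so the correspondence of Theorem~\ref{thm:bijection} restricts to one preserving the parameter $p$. The four cases then contribute $A_{\mathbf{S}}(m-1,k,p)$, $(m-k+1)A_{\mathbf{S}}(m-1,k-1,p)$, $(m-p)A_{\mathbf{S}}(m-1,k-1,p-1)$, and $(m-p)(m-k+1)A_{\mathbf{S}}(m-1,k-2,p-1)$, which add up to \eqref{eq:recurrenceanassas}; degenerate arguments are covered by the convention $A_{\mathbf{S}}(m,k,p)=0$ unless $0\le p\le k$.

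For the initial conditions, the cases $m=0$ and $k=0$ are immediate, and for $k=1$ one counts the $\binom{m+1}{2}=\stirling{m+1}{m}$ squares on or above the diagonal and the $\binom{m}{2}=\stirling{m}{m-1}$ squares strictly below it. Setting $p=0$ in \eqref{eq:recurrenceanassas} annihilates the two terms with third argument $-1$ and leaves $A_{\mathbf{S}}(m,k,0)=A_{\mathbf{S}}(m-1,k,0)+(m-k+1)A_{\mathbf{S}}(m-1,k-1,0)$, which is the defining recurrence for $\stirling{m+1}{m-k+1}$; together with $A_{\mathbf{S}}(0,k,0)=\delta_{k0}$ this forces $A_{\mathbf{S}}(m,k,0)=\stirling{m+1}{m-k+1}$.

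Finally I would prove \eqref{eq:closedanassas} by induction on $m$, taking these four families as the base. The inductive step is to substitute the $m-1$ instances of \eqref{eq:closedanassas} into the right-hand side of \eqref{eq:recurrenceanassas}, rewrite the Stirling factors $\stirling{m+1}{m-k+j+1}$ of the target via $\stirling{m+1}{m-k+j+1}=\stirling{m}{m-k+j}+(m-k+j+1)\stirling{m}{m-k+j+1}$, absorb the prefactors $(m-k+1)$ and $(m-p)$ and the shifted falling factorials, and recombine the binomials $\binom{k-p-1}{j}$ and $\binom{k-j}{k-p}$ through Pascal- and Vandermonde-type identities, so that after reindexing the four sums to a common Stirling factor the coefficients collapse to those of \eqref{eq:closedanassas}. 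The hard part will be exactly this recombination: it is a finite hypergeometric identity whose bookkeeping is delicate because of the simultaneous shifts of the falling factorial and of the two binomials, and because the extended binomial convention must be used (so that, for instance, \eqref{eq:closedanassas} vanishes when $p>k$). A possibly cleaner alternative would be a direct combinatorial reading of the summand — with $\stirling{m+1}{m-k+j+1}$ encoding the placement of the $k-j$ pieces not subject to the extra diagonal restriction and the remaining factors the choices and mutual interactions of the $p$ below-diagonal pieces — but I expect the inductive verification to be the more direct route.
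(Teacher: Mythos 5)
Your proposal follows essentially the same route as the paper: the same four-case decomposition according to how a nonattacking configuration meets $\mathbf{D}_m$ and $\mathbf{V}_m$ (your explicit handling of the corner square $(m,m)$ in the two-extra-piece case, and of why $f_A$ preserves the parameter $p$, is in fact more careful than the paper's terse statement of the counts), the same initial conditions, and verification of the closed form by substituting it into the recurrence. The only part you have not executed is that final algebraic verification, which the paper carries out in full using exactly the identities you name --- the Stirling recurrence $\stirling{m+1}{n}=\stirling{m}{n-1}+n\stirling{m}{n}$ applied twice, Pascal-type recombination of the binomials $\binom{k-p-1}{j}$ and $\binom{k-j}{k-p}$, and the falling-factorial identity $(r)_s=(r-1)_s+s(r-1)_{s-1}$ --- so your plan is sound but the delicate bookkeeping you anticipate remains to be done.
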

\begin{proof}
 We have the following disjoint possibilities for the placement of $k$ nonattacking anassas on $\mathbf{S}_m$ such that $p$ of them are placed strictly below the board's main diagonal:
 \begin{enumerate}[wide=0.005mm,label=Case \arabic*.]
    \item Place all of them on $\mathbf{S}_m-\mathbf{\Lambda}_m$, while keeping $p$ of them below $\mathbf{\Lambda}_m$. This can be done in $A_{\mathbf{S}}(m-1,k,p)$ ways.
    \item Place $k-1$ of them on $\mathbf{S}_m-\mathbf{\Lambda}_m$, while keeping $p$ of them below $\mathbf{\Lambda}_m$, and then place the remaining anassa on one of the $m-k+1$ unattacked squares of $\mathbf{D}_m$. This can be done in $(m-k+1)A_{\mathbf{S}}(m-1,k-1,p)$ ways.
    \item Place $k-1$ of them on $\mathbf{S}_m-\mathbf{\Lambda}_m$, while keeping $p-1$ of them below $\mathbf{\Lambda}_m$, and then place the remaining anassa on one of the $m-1-(p-1)=m-p$ unattacked squares of $\mathbf{V}_m$. This can be done in $(m-p)A_{\mathbf{S}}(m-1,k-1,p-1)$ ways.
    \item Place $k-2$ of them on $\mathbf{S}_m-\mathbf{\Lambda}_m$, while keeping $p-1$ of them below $\mathbf{\Lambda}_m$, then place the first remaining anassa on one of the $m-1-(p-1)=m-p$ unattacked squares of $\mathbf{V}_m$ and then place the other remaining anassa on one of the $m-k+1$ unattacked squares of $\mathbf{D}_m$. This can be done in $(m-p)(m-k+1)A_{\mathbf{S}}(m-1,k-2,p-1)$ ways.
\end{enumerate}
    These cases establish recurrence \eqref{eq:recurrenceanassas}. The first two initial conditions are consequences of an empty board, due either to the absence of squares ($m=0$) or to the absence of pieces ($k=0$). The last two initial conditions are consequences of the well-known result that states that $\stirling{m+1}{m-k+1}$ enumerates the nonattacking configurations of $k$ rooks (or $k$ anassas, due to the board symmetry) placed on a staircase board, in the shape of a right isosceles triangle, with $m$ squares in its longest row. 
    
    In order to prove the validity of the closed-form solution, first check the initial conditions
    \begin{align*}
    A_\mathbf{S}(0,k,p)&=\sum_{j=0}^{p}(-k+j)_j\binom{k-p-1}{j}\binom{k-j}{k-p}\stirling{1}{-k+j+1}\\&=\delta_{k0}\sum_{j=0}^{p}(j)_j\binom{-p-1}{j}\binom{-j}{-p}\stirling{1}{j+1}= \delta_{k0}\delta_{p0},\\
    A_{\mathbf{S}}(m,0,p)&=\sum_{j=0}^{p}(m+j)_j\binom{-p-1}{j}\binom{-j}{-p}\stirling{m+1}{m+j+1}=\delta_{p0},\\
    A_{\mathbf{S}}(m,1,p)&=\sum_{j=0}^{p}(m-1+j)_j\binom{-p}{j}\binom{1-j}{1-p}\stirling{m+1}{m+j}\\
    &=(m-1)_0\binom{-p}{0}\binom{1}{1-p}\stirling{m+1}{m}+(m)_1\binom{-p}{1}\binom{0}{1-p}\stirling{m+1}{m+1}\\
    &=(\delta_{p1}+\delta_{p0})\stirling{m+1}{m}-m\delta_{p1} = \delta_{p0}\stirling{m+1}{m}+\delta_{p1}\stirling{m}{m-1},\\
    A_{\mathbf{S}}(m,k,0)&=\sum_{j=0}^{0}(m-k+j)_j\binom{k-1}{j}\binom{k-j}{k}\stirling{m+1}{m-k+j+1} = \stirling{m+1}{m-k+1}.
    \end{align*}
    Then, in order to replace \eqref{eq:closedanassas} in \eqref{eq:recurrenceanassas}, note that
    \begin{subequations}
    \begin{align}
     &(m-p)A_{\mathbf{S}}(m-1,k-1,p-1)= \nonumber\\&=(m-p)\sum_{j=0}^{p}(m-k+j)_j\binom{k-p-1}{j}\binom{k-j-1}{k-p}\stirling{m}{m-k+j+1},\label{eq:anassasSubs1}\\
     &(m-k+1)A_{\mathbf{S}}(m-1,k-1,p)= \nonumber\\&=(m-k+1)\sum_{j=0}^{p}(m-k+j)_j\binom{k-p-2}{j}\binom{k-j-1}{k-p-1}\stirling{m}{m-k+j+1},\label{eq:anassasSubs2}\\
     &(m-p)(m-k+1)A_{\mathbf{S}}(m-1,k-2,p-1) = \nonumber\\&=(m-p)(m-k+1)\sum_{j=0}^{p-1}(m-k+j+1)_j\binom{k-p-2}{j}\binom{k-j-2}{k-p-1}\stirling{m}{m-k+j+2}\nonumber\\&=(m-p)\sum_{j=1}^{p}(m-k+j)_j\binom{k-p-2}{j-1}\binom{k-j-1}{k-p-1}\stirling{m}{m-k+j+1}.\label{eq:anassasSubs3}
    \end{align}
    \end{subequations}
    Now, sum Equations \eqref{eq:anassasSubs1} to \eqref{eq:anassasSubs3}, simplify the binomial coefficients, and use the recurrence for the Stirling numbers of the second kind to obtain
    \begin{align}\label{eq:intanassa1}
     &(m-p)A_{\mathbf{S}}(m-1,k-1,p-1) + (m-k+1)A_{\mathbf{S}}(m-1,k-1,p) + \nonumber\\&+(m-p)(m-k+1)A_{\mathbf{S}}(m-1,k-2,p-1) =\nonumber\\&=\sum_{j=0}^{p}(m-k+j+1)(m-k+j)_j\binom{k-p-1}{j}\binom{k-j}{k-p}\stirling{m}{m-k+j+1}+\nonumber\\&+\sum_{j=0}^{p}(j+1)(m-k+j)_j\binom{k-p-1}{j+1}\binom{k-j-1}{k-p}\stirling{m}{m-k+j+1}\nonumber\\&=\sum_{j=0}^{p}(m-k+j+1)(m-k+j)_j\binom{k-p-1}{j}\binom{k-j}{k-p}\stirling{m}{m-k+j+1}+\nonumber\\&+\sum_{j=1}^{p}j(m-k+j-1)_{j-1}\binom{k-p-1}{j}\binom{k-j}{k-p}\stirling{m+1}{m-k+j+1}-\nonumber\\&-\sum_{j=1}^{p}j(m-k+j-1)_{j-1}(m-k+j+1)\binom{k-p-1}{j}\binom{k-j}{k-p}\stirling{m}{m-k+j+1}.
    \end{align}
    Then, use the Stirling numbers recurrence again to write
    \begin{align}\label{eq:intanassa2}
        &A_{\mathbf{S}}(m-1,k,p)=\sum_{j=0}^{p}(m-k+j-1)_j\binom{k-p-1}{j}\binom{k-j}{k-p}\stirling{m}{m-k+j}=\nonumber\\
        &=\sum_{j=0}^{p}(m-k+j-1)_j\binom{k-p-1}{j}\binom{k-j}{k-p}\stirling{m+1}{m-k+j+1}-\nonumber\\&-\sum_{j=0}^{p}(m-k+j+1)(m-k+j-1)_j\binom{k-p-1}{j}\binom{k-j}{k-p}\stirling{m}{m-k+j+1}.
    \end{align}
    Now, sum \eqref{eq:intanassa1} and \eqref{eq:intanassa2} and use the identity $(r)_s=(r-1)_s+s(r-1)_{s-1}$ to retrieve $A_{\mathbf{S}}(m,k,p)$, proving that recurrence \eqref{eq:recurrenceanassas} holds.
\end{proof}
\begin{remark}
We may verify that, for $p=k$, Equation \eqref{eq:closedanassas} is a telescoping sum, resulting in
\begin{align*}
&A_{\mathbf{S}}(m,k,k)=\sum_{j=0}^{k}(-1)^j(m-k+j)_j\stirling{m+1}{m-k+j+1}\\&=\sum_{j=0}^{k}(-1)^j\left[(m-k+j)_j\stirling{m}{m-k+j}+(m-k+j+1)_{j+1}\stirling{m}{m-k+j+1}\right]
\\&=\stirling{m}{m-k}.
\end{align*}
This result is expected, since $\stirling{m}{m-k}$ enumerates the nonattacking configurations of $k$ anassas placed on a staircase board of size $m-1$.
\end{remark}
\begin{theorem}\label{thm:closedanassasSummed}
  Let $A_{\mathbf{S}}(m,k)$  be the number of possible nonattacking configurations of $k$ anassas placed on the square board $\mathbf{S}_m$. Then,
  \begin{align}
     A_{\mathbf{S}}(m,k) = \sum_{j=0}^{\lceil\frac{k}{2}\rceil}(m-k+j)_j\stirling{m}{m-k+j}2^{k-2j}\left[\binom{k-j}{j-1}+\binom{k-j+1}{j}\right],\label{eq:closedanassasSummed}
  \end{align}
  for all integers $m,k\geq 0$. 
\end{theorem}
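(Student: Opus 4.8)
The plan is to derive \eqref{eq:closedanassasSummed} from the previous theorem by summing over the auxiliary parameter $p$, that is, by using $A_{\mathbf{S}}(m,k)=\sum_{p=0}^{k}A_{\mathbf{S}}(m,k,p)$. Substituting the closed form \eqref{eq:closedanassas} and interchanging the order of summation (so that $j$ runs from $0$ to $k$ and $p$ from $j$ to $k$) isolates a purely binomial inner sum: writing $S(j,k):=\sum_{p=j}^{k}\binom{k-p-1}{j}\binom{k-j}{k-p}$, one gets $A_{\mathbf{S}}(m,k)=\sum_{j=0}^{k}(m-k+j)_j\stirling{m+1}{m-k+j+1}\,S(j,k)$. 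What remains is to evaluate $S(j,k)$ in closed form and to replace $\stirling{m+1}{m-k+j+1}$ by Stirling numbers with upper argument $m$, so that the expression matches the stated one.

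For $S(j,k)$ I would substitute $q=k-p$ to get $S(j,k)=\sum_{q=0}^{k-j}\binom{q-1}{j}\binom{k-j}{q}$, then use the formal power series identity $\binom{q-1}{j}=[x^{j}](1+x)^{q-1}$ (valid for every integer $q$ and every $j\ge 0$; in particular $\binom{-1}{j}=(-1)^{j}$ corresponds to $(1+x)^{-1}=\sum_{l\ge 0}(-x)^{l}$), together with the binomial theorem $\sum_{q=0}^{k-j}\binom{k-j}{q}(1+x)^{q}=(2+x)^{k-j}$, to conclude that $S(j,k)=[x^{j}]\dfrac{(2+x)^{k-j}}{1+x}$.

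Next I would handle the Stirling number. Applying the recurrence $\stirling{m+1}{r}=r\stirling{m}{r}+\stirling{m}{r-1}$ with $r=m-k+j+1$, using $(m-k+j+1)(m-k+j)_j=(m-k+j+1)_{j+1}$, and reindexing $j\mapsto j-1$ in the first of the two resulting sums collapses everything to $A_{\mathbf{S}}(m,k)=\sum_{j\ge 0}(m-k+j)_j\stirling{m}{m-k+j}\bigl[S(j,k)+S(j-1,k)\bigr]$; the boundary term at $j=k+1$ drops because $\stirling{m}{m+1}=0$, and $S(-1,k)=0$. The crucial observation is that the extra summand $S(j-1,k)$ is exactly the correction that turns the rational function above into a polynomial: since $S(j-1,k)=[x^{j-1}]\bigl((2+x)\tfrac{(2+x)^{k-j}}{1+x}\bigr)=2[x^{j-1}]\tfrac{(2+x)^{k-j}}{1+x}+[x^{j-2}]\tfrac{(2+x)^{k-j}}{1+x}$, one has $S(j,k)+S(j-1,k)=[x^{j}](1+x)^{2}\tfrac{(2+x)^{k-j}}{1+x}=[x^{j}](1+x)(2+x)^{k-j}$. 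Extracting this coefficient with $[x^{i}](2+x)^{N}=\binom{N}{i}2^{N-i}$ gives $\binom{k-j}{j}2^{k-2j}+\binom{k-j}{j-1}2^{k-2j+1}=2^{k-2j}\bigl[\binom{k-j+1}{j}+\binom{k-j}{j-1}\bigr]$ after a single use of Pascal's rule, which is precisely the bracket in \eqref{eq:closedanassasSummed}. Finally, for $j$ in the range $\lceil k/2\rceil<j\le k$ both $\binom{k-j+1}{j}$ and $\binom{k-j}{j-1}$ are ordinary binomial coefficients with nonnegative upper entry that is strictly smaller than the lower one, hence vanish, which allows truncating the sum to $0\le j\le\lceil k/2\rceil$.

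I expect the only real difficulty to be bookkeeping: getting the index shift and the vanishing boundary terms in the Stirling step exactly right (so that no $\stirling{m+1}{\cdot}$ survives), and checking that the truncation point is $\lceil k/2\rceil$ and not $\lfloor k/2\rfloor$. There is no structural obstacle — the reformulation $S(j,k)=[x^{j}]\frac{(2+x)^{k-j}}{1+x}$ is elementary, and the identity $S(j,k)+S(j-1,k)=[x^{j}](1+x)(2+x)^{k-j}$ is the one small piece of ingenuity, after which the computation is routine coefficient extraction. As a consistency check I would confirm the cases $k=0,1,2$, for which the formula should reduce to $1$, $m^{2}$, and $12\binom{m}{4}+13\binom{m}{3}+3\binom{m}{2}$ respectively.
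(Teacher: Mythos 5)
Your proposal is correct; I checked the key identities (the value of $S(j,k)$, the combination $S(j,k)+S(j-1,k)$, and the final bracket) against small cases and they all hold. The skeleton matches the paper's proof: both compute $A_{\mathbf{S}}(m,k)=\sum_{p=0}^{k}A_{\mathbf{S}}(m,k,p)$ from \eqref{eq:closedanassas} and both use the recurrence $\stirling{m+1}{r}=r\stirling{m}{r}+\stirling{m}{r-1}$ together with $(m-k+j+1)(m-k+j)_j=(m-k+j+1)_{j+1}$ and a shift $j\mapsto j-1$ to land on Stirling numbers with upper argument $m$. Where you genuinely diverge is in the order of operations and in how the $p$-sum is killed. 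The paper applies the Stirling recurrence and a binomial rearrangement to $A_{\mathbf{S}}(m,k,p)$ \emph{before} summing over $p$ (Equation \eqref{eq:intsquareanassas1}), so that the sum over $p$ collapses via the elementary row sum $\sum_{p=j}^{k}\binom{k-2j}{p-j}=2^{k-2j}$. You sum over $p$ first, which produces the less transparent quantity $S(j,k)=\sum_{q=0}^{k-j}\binom{q-1}{j}\binom{k-j}{q}$, and you evaluate it by coefficient extraction as $[x^j](2+x)^{k-j}/(1+x)$; your one nontrivial observation — that the reindexing forced by the Stirling recurrence pairs $S(j,k)$ with $S(j-1,k)$ so that $S(j,k)+S(j-1,k)=[x^j](1+x)(2+x)^{k-j}$ becomes a polynomial coefficient — replaces the paper's binomial-coefficient gymnastics and makes the appearance of the factor $2^{k-2j}$ and of the bracket $\binom{k-j}{j-1}+\binom{k-j+1}{j}$ essentially automatic. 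The trade-off is that the paper's route yields the intermediate refinement \eqref{eq:intsquareanassas1} for fixed $p$ as a by-product, whereas yours is shorter and more mechanical once $S(j,k)$ is recognized as a coefficient. If you write this up, do make explicit the two conventions your argument leans on: $\binom{-1}{j}=(-1)^j$ at the boundary term $q=0$ (consistent with the paper's extended binomial coefficients), and the vanishing of the boundary terms $S(-1,k)$ at $j=0$ and $\stirling{m}{m+1}$ at $j=k+1$, as you already noted.
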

\begin{proof}
First, we use the recurrence for the Stirling numbers of the second kind and manipulate the binomial coefficients to rewrite \eqref{eq:closedanassas} as
\begin{align}\label{eq:intsquareanassas1}
    &A_{\mathbf{S}}(m,k,p)=\sum_{j=0}^{p}(m-k+j)_j\binom{k-p-1}{j}\binom{k-j}{k-p}\stirling{m}{m-k+j}+\nonumber\\&+\sum_{j=1}^{p+1}(m-k+j)_j\binom{k-p-1}{j-1}\binom{k-j+1}{k-p}\stirling{m}{m-k+j}\nonumber\\&=\sum_{j=0}^{p+1}(m-k+j)_j\left[\binom{k-j}{k-p}\binom{k-p}{j}+\binom{k-j}{k-p-1}\binom{k-p-1}{j-1}\right]\stirling{m}{m-k+j}\nonumber\\&=\sum_{j=0}^{p+1}(m-k+j)_j\left[\binom{k-j}{j}\binom{k-2j}{p-j}+\binom{k-j}{j-1}\binom{k-2j+1}{p-j+1}\right]\stirling{m}{m-k+j}.
\end{align}
We also have
\begin{align} \label{eq:intsquareanassas2}
    A_{\mathbf{S}}(m,k) = \sum_{p=0}^{k}A_{\mathbf{S}}(m,k,p).
\end{align}
Then, replace \eqref{eq:intsquareanassas1} in \eqref{eq:intsquareanassas2} and rearrange the summation indices to obtain
\begin{align*}
    A_{\mathbf{S}}(m,k)&=\sum_{j=0}^{k+1}(m-k+j)_j\stirling{m}{m-k+j}\binom{k-j}{j}\sum_{p=j}^{k}\binom{k-2j}{p-j}+\\&+\sum_{j=1}^{k+1}(m-k+j)_j\stirling{m}{m-k+j}\binom{k-j}{j-1}\sum_{p=j-1}^{k}\binom{k-2j+1}{p-j+1}.
\end{align*}
Now, use the identity $\sum_{p=j}^k\binom{k-2j}{p-j}=2^{k-2j}$ and simplify the binomial coefficients. The upper summation limit can be reduced to $\lceil\frac{k}{2}\rceil$, as indicated by the remaining binomial coefficients. These manipulations result in \eqref{eq:closedanassasSummed}.
\end{proof}
\begin{remark}
As done in Remark \ref{rem:mminusone}, making $m=-1$ in \eqref{eq:closedanassasSummed} leads to
\begin{align*}
    A_{\mathbf{S}}(-1,k) &= \sum_{j=0}^{\lceil\frac{k}{2}\rceil}(-1-k+j)_j\stirling{-1}{-1-k+j}2^{k-2j}\left[\binom{k-j}{j-1}+\binom{k-j+1}{j}\right]\nonumber\\&=\sum_{j=0}^{\lceil\frac{k}{2}\rceil}(-1)^j(k)_j(k-j)!2^{k-2j}\left[\binom{k-j}{j-1}+\binom{k-j+1}{j}\right]\nonumber\\&=k!\sum_{j=0}^{\lceil\frac{k}{2}\rceil}(-1)^j2^{k-2j}\left[\binom{k-j}{j-1}+\binom{k-j+1}{j}\right]
    \nonumber\\&=k!\sum_{j=0}^{\lceil\frac{k}{2}\rceil}(-1)^j\left[2^{k+1-2j}\binom{k+1-j}{k+1-2j}-2^{k-2j}\binom{k-j}{k-2j}\right].
\end{align*}
But, by Gould and Quaintance \cite[identity (1.69)]{gould}, we have
\begin{align*}
   \sum_{j=0}^{\lceil\frac{k}{2}\rceil}(-1)^j2^{k-2j}\binom{k-j}{k-2j}=k+1.
\end{align*}
Then, as predicted by Chaiken, Hanusa, and Zaslavsky \cite[Thm.\ 5.8]{chaiken1},
\begin{align*}
A_{\mathbf{S}}(-1,k)=k!(k+2-k-1)=k!.
\end{align*}
\end{remark}
\begin{remark}
Kot\v{e}\v{s}ovec \cite[p.\ 716]{kotesovec} provided two expressions for the anassa (``semi-rook + semi-bishop'') when $k=m$. Here, his expressions are trivially extended to account for the case $m=0$ as well: 
\begin{align*}
    A_{\mathbf{S}}(m,m)=\sum_{j=0}^m\binom{m+1}{j}\stirling{m}{j}\frac{j!}{2^j}=\frac{1}{2^{m+1}}\sum_{j=0}^{m+1}\binom{m+1}{j}j^m.
\end{align*}
Using \eqref{eq:closedanassasSummed}, this result implies the identity
\begin{align*}
    \sum_{j=0}^m\binom{m+1}{j}\stirling{m}{j}\frac{j!}{2^j}=2^m\sum_{j=0}^{\lceil\frac{m}{2}\rceil}\stirling{m}{j}\frac{j!}{2^{2j}}\left[\binom{m-j}{j-1}+\binom{m-j+1}{j}\right].
\end{align*}
However, we could not find a direct simplification for the right-hand side that could prove this result.
\end{remark}
We highlight that Theorem \ref{thm:closedanassasSummed} led to the submission of a new sequence, \seqnum{A378561}, to the OEIS database.

\subsection{Quasi-polynomial coefficients for the anassa}
As for the bishop, an explicit expression for the quasi-polynomial coefficients of $A_{\mathbf{S}}(m,k)$ can be obtained by using Lemma \ref{thm:ward} and some binomial identities.
\begin{theorem}\label{thm:coeffanassas}
    The quasi-polynomial coefficients of $A_{\mathbf{S}}(m,k)$ are
    \begin{align}
        &[m^d]A_{\mathbf{S}}(m,k)=\nonumber\\&=\sum_{i=d}^{2k}\frac{(-1)^{i-d}}{i!}\ustirlingfirst{i}{d}\sum_{j=0}^{U(i,k)}\alpha(k,j)\sum_{p=L(i,k)}^{k-j}\Astirling{p+k-j}{p}\sum_{b=0}^{j}\binom{p}{b}\binom{k}{j-b}\binom{b}{k+p-i},
    \end{align}
    for $0 \leq d \leq 2k$, where $\alpha(k,j)=2^{k-2j}\left[\binom{k-j}{j-1}+\binom{k-j+1}{j}\right]j!$, $U(i,k)=\min(\lceil\frac{k}{2}\rceil,2k-i)$, and $L(i,k)=\max(0,i-k)$.
\end{theorem}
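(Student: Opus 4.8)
The plan is to follow the same route as in the proof of Theorem~\ref{thm:coeffbishops}, but starting from the summed closed form \eqref{eq:closedanassasSummed} instead of \eqref{eq:closedbishops}. A preliminary remark streamlines everything: each summand of \eqref{eq:closedanassasSummed} is a genuine polynomial in $m$, since $(m-k+j)_j$ has degree $j$ and $\stirling{m}{m-k+j}$ is, for fixed $j$, a polynomial in $m$ of degree $2(k-j)$ (this is the content of Lemma~\ref{thm:ward} applied with second argument $k-j$). Hence $A_{\mathbf{S}}(m,k)$ is an honest polynomial in $m$, so no splitting into even and odd $m$ — and no appeal to Lemma~\ref{thm:binomial} — is needed, in contrast with the bishop.

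First I would absorb the $j!$ hidden in $\alpha(k,j)=2^{k-2j}\!\left[\binom{k-j}{j-1}+\binom{k-j+1}{j}\right]\!j!$ and use $(m-k+j)_j=j!\binom{m-k+j}{j}$ to rewrite \eqref{eq:closedanassasSummed} as
\begin{align*}
A_{\mathbf{S}}(m,k)=\sum_{j=0}^{\lceil k/2\rceil}\alpha(k,j)\binom{m-k+j}{j}\stirling{m}{m-k+j}.
\end{align*}
Then I would apply Lemma~\ref{thm:ward} with second argument $k-j$, giving $\stirling{m}{m-k+j}=\sum_{p=0}^{k-j}\Astirling{p+k-j}{p}\binom{m}{p+k-j}$, and linearize the product $\binom{m-k+j}{j}\binom{m}{p+k-j}$ in the basis $\{\binom{m}{i}\}$ using only identities already quoted in the paper: identity \eqref{eq:nanj} with $w=m$, $y=m-k+j$ and lower parameters $p+k-j,\,j$ (which neatly produces the parameters $p$ and $k$) yields $\binom{m}{p+k-j}\binom{m-k+j}{j}=\sum_{b=0}^{j}\binom{p}{b}\binom{k}{j-b}\binom{m+b}{p+k}$, and the Vandermonde convolution expands $\binom{m+b}{p+k}=\sum_{i}\binom{b}{p+k-i}\binom{m}{i}$. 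Interchanging the finite sums collects the coefficient of $\binom{m}{i}$, and a last substitution $\binom{m}{i}=\frac{1}{i!}\sum_{d}(-1)^{i-d}\ustirlingfirst{i}{d}m^d$ (exactly as in the proof of Theorem~\ref{thm:coeffbishops}), followed by reading off $[m^d]$, produces the claimed expression, with $k+p-i$ in place of the equivalent $p+k-i$.

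What remains, and is the only delicate point, is justifying the summation ranges. The bound $i\ge d$ is immediate from $\ustirlingfirst{i}{d}=0$ for $i<d$, and $i\le 2k$ follows from $i\le p+k\le (k-j)+k\le 2k$ in the Vandermonde step. For $p$, the factor $\binom{b}{k+p-i}$ vanishes whenever $k+p-i<0$, which forces $p\ge i-k$; together with $p\ge 0$ this gives the lower limit $L(i,k)=\max(0,i-k)$, while $p\le k-j$ is inherited from Lemma~\ref{thm:ward}. Finally, for fixed $i$ the interval $[\max(0,i-k),\,k-j]$ of admissible $p$ becomes empty as soon as $i-k>k-j$, i.e.\ $j>2k-i$, so all those $j$-terms are identically zero; intersecting with the original range $0\le j\le\lceil k/2\rceil$ yields $0\le j\le U(i,k)=\min(\lceil k/2\rceil,\,2k-i)$. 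The main obstacle is thus not any single hard identity — all the algebraic identities needed are already established in the excerpt — but the bookkeeping: one must check that each truncation of a summation range discards only terms that already vanish, so that all the interchanges and restrictions of these finite sums are legitimate, precisely the kind of verification carried out for the bishop around \eqref{eq:coeffWhitebishops}.
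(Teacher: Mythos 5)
Your proposal is correct and follows essentially the same route as the paper's proof: absorb the $j!$ into $\alpha(k,j)$, expand the Stirling number via Lemma~\ref{thm:ward}, linearize the binomial product with identity \eqref{eq:nanj} and the Vandermonde convolution, and finish with the Stirling-number expansion of $\binom{m}{i}$. Your explicit justification of the truncated summation limits $U(i,k)$ and $L(i,k)$ is a welcome elaboration of what the paper compresses into ``rearrange the summation indices.''
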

\begin{proof}
    First, use Lemma \ref{thm:ward} to expand the Stirling numbers of the second kind in \eqref{eq:closedanassasSummed}, obtaining
    \begin{align*}
        A_{\mathbf{S}}(m,k)= \sum_{j=0}^{\lceil\frac{k}{2}\rceil}\alpha(k,j)\sum_{p=0}^{k-j}\Astirling{p+k-j}{p}\binom{m}{p+k-j}\binom{m-k+j}{j}.
    \end{align*}
    Then, use identity \eqref{eq:nanj} to expand the binomial product
    \begin{align*}
        A_{\mathbf{S}}(m,k)= \sum_{j=0}^{\lceil\frac{k}{2}\rceil}\alpha(k,j)\sum_{p=0}^{k-j}\Astirling{p+k-j}{p}\sum_{b=0}^{j}\binom{p}{b}\binom{k}{j-b}\binom{m+b}{k+p},
    \end{align*}
    and use the Vandermonde convolution followed by a rearrangement of the summation indices to obtain
        \begin{align*}
        &A_{\mathbf{S}}(m,k)= \sum_{j=0}^{\lceil\frac{k}{2}\rceil}\alpha(k,j)\sum_{p=0}^{k-j}\Astirling{p+k-j}{p}\sum_{b=0}^{j}\binom{p}{b}\binom{k}{j-b}\sum_{i=0}^{k+p}\binom{m}{i}\binom{b}{k+p-i}\\
        &= \sum_{i=0}^{2k}\binom{m}{i}\sum_{j=0}^{\min(\lceil\frac{k}{2}\rceil,2k-i)}\alpha(k,j)\sum_{p=\max(0,i-k)}^{k-j}\Astirling{p+k-j}{p}\sum_{b=0}^{j}\binom{p}{b}\binom{k}{j-b}\binom{b}{k+p-i}.
    \end{align*}
    The final step is to use the identity $\binom{m}{i}=\sum_{d=0}^{i}\frac{(-1)^{i-d}}{i!}\ustirlingfirst{i}{d}m^d$ and rearrange the summation indices again, giving the result.
\end{proof}
\begin{remark}
    Chaiken, Hanusa, and Zaslavsky \cite[section 5.4]{chaiken5} conjectured that all quasi-polynomials for the anassa have a factor of $(m)_k$. This conjecture can be proved true using Equation \eqref{eq:closedanassasSummed} and Lemma \ref{thm:ward}:
    \begin{align*}
        A_{\mathbf{S}}(m,k)= \sum_{j=0}^{\lceil\frac{k}{2}\rceil}2^{k-2j}\left[\binom{k-j}{j-1}+\binom{k-j+1}{j}\right]\sum_{p=0}^{k-j}\Astirling{p+k-j}{p}\frac{(m)_{p+k-j}(m-k+j)_j}{(p+k-j)!}.
    \end{align*}
    It is straightforward to verify that $(m)_{p+k-j}(m-k+j)_j=(m)_k(m-k+j)_p$, hence,
    \begin{align*}
        A_{\mathbf{S}}(m,k)= (m)_k\sum_{j=0}^{\lceil\frac{k}{2}\rceil}2^{k-2j}\left[\binom{k-j}{j-1}+\binom{k-j+1}{j}\right]\sum_{p=0}^{k-j}\Astirling{p+k-j}{p}\frac{(m-k+j)_p}{(p+k-j)!}.
    \end{align*}
\end{remark}

\section{Conclusion}
Theorem \ref{thm:bijection} provides a way to establish recurrences for the number of possible nonattacking placements of bishops and anassas on the square board. The closed-form solutions to these recurrences are the quasi-polynomials \eqref{eq:closedbishops} and \eqref{eq:closedanassasSummed}, for bishops and anassas, respectively. These expressions answer some of the questions raised by Chaiken, Hanusa, and Zaslavsky \cite{chaiken5} and allow the derivation of explicit formulas for the quasi-polynomial coefficients, results proved in Theorems \ref{thm:coeffbishops} and \ref{thm:coeffanassas}. 

\section{Acknowledgments}
I am grateful to God Almighty, for of Him, and through Him, and to Him, are all things.

\bigskip
\hrule
\bigskip

\noindent 2020 {\it Mathematics Subject Classification}:
Primary 05A15; Secondary 00A08.

\noindent \emph{Keywords: } Bishop, Anassa, Nonattacking Placement.

\bigskip
\hrule
\bigskip

\noindent (Concerned with sequences
\seqnum{A008277},
\seqnum{A008299},
\seqnum{A132393},
\seqnum{A134991},
\seqnum{A274105},
\seqnum{A274106},
\seqnum{A378561},
\seqnum{A378590}.)

\bigskip
\hrule
\bigskip

\vspace*{+.1in}
\noindent
Received XXXXXX XX 20XX;
revised versions received XXXXXX XX 20XX; XXXXXX XX 20XX.
Published in {\it Journal of Integer Sequences}, XXXXXX XX 20XX.

\bigskip
\hrule
\bigskip

\noindent
Return to
\href{https://cs.uwaterloo.ca/journals/JIS/}{Journal of Integer Sequences home page}.
\vskip .1in

\end{document}